\providecommand{\U}[1]{\protect\rule{.1in}{.1in}}
\newtheorem{example}{Example}
\newtheorem{proposition}{Proposition}
\newtheorem{lemma}{Lemma}
\newcommand{\df}{{F}}
\newcommand{\ldf}{{\underline{F}}}
\newcommand{\udf}{{\overline{F}}}
\newcommand{\pbox}{{[\ldf,\udf]}}
\newcommand{\setpb}{{\Phi(\ldf,\udf)}}
\begin{document}
\title[expectations and p-boxes]{Computing expectations with continuous p-boxes: univariate case}
\author[L. Utkin]{Lev Utkin}
\address{St. Petersburgh Forest Technical Academy, Dept. of Computer Science, Institutski per. 5, 194021, St. Petersburgh, Russia}
\email{lev.utkin@list.ru}
\author[S. Destercke]{Sebastien Destercke}
\address{Institut de Radioprotection et
de Suret\'e\ Nucl\'{e}aire (IRSN), Cadarache,
France} \email{desterck@irit.fr}

\thanks{This paper is an extended version of the first part of~\cite{UtkinDest07}}
\keywords{P-boxes, Expectations, Linear programming, Random sets}

\begin{abstract}
Given an imprecise probabilistic model over a continuous space,
computing lower/upper expectations is often computationally hard
to achieve, even in simple cases. Because expectations are essential in decision making and risk analysis, tractable methods to compute them are crucial in many applications involving imprecise probabilistic models. We
concentrate on p-boxes (a simple and popular model), and on the computation of lower
expectations of non-monotone functions. This paper is devoted to the univariate case, that is where only one variable has uncertainty. We propose and compare two approaches : the first using general linear programming, and the second using the fact that p-boxes are
special cases of random sets. We underline the complementarity of
both approaches, as well as the differences.
\end{abstract}
\maketitle

\section{Introduction}

There are many situations where a unique probability distribution cannot be identified to describe our uncertainty about the value assumed by a variable on a state space. This can happen for example when data or expert judgments are not sufficient and/or are conflicting. In such cases, a solution is to model information by the means of imprecise probabilities, that is by considering either sets of probability distributions~\cite{levi1980a,Huber81} or  bounds on expectations~\cite{Miranda08}. Note that, from a purely mathematical point of view, such representations encompass many other frameworks dealing with the representation of incomplete and conflicting information, such as random sets~\cite{Dempster67} and possibility theory~\cite{DuboisPrade92bis}.

When considering such models, the expectation of a real-valued bounded function over the state space is no longer precise and is lower- and upper-bounded by some value. In applications involving risk analysis or decision making, the decision process will be based on the values of these lower and upper expectations, using extensions of the classical expected utility criterion~\cite{Troffaes07}.  When the state space on which the variable assumes its value is finite, lower and upper expectations can be numerically computed by using, for instance, linear programming techniques~\cite{UtkinAug05}. The problem becomes quite more complicated when uncertainty models are defined over infinite state spaces (e.g., the real line, product spaces, \ldots). 

In this latter case, computing exactly and analytically the lower and upper expectations of a given function is impossible most of the time, and there are very few methods and algorithms around to compute approximations of these bounds~\cite{Cozman00,ObermeierAugusti07,Troffaes08}. In this paper, we study such analytical solutions for a specific case, that is the one where the uncertainty over a variable is described by a pair of upper and lower cumulative distributions (a so-called p-box~\cite{FersonAll03}). In essence, such a study comes down to search the extremal points of the p-box for which the expectation bounds are reached. The features of these solutions also allow us to suggest some ways to build more efficient numerical methods and algorithms, useful when analytical solutions cannot be computed. We also assume that the function over which lower and upper expectations have to be computed can be non-monotone but has a (partially) known behaviour.  In this paper, we concentrate on the univariate case, i.e., where the value assumed by only one variable is tainted with uncertainty. The multivariate case as well as the case of mixed strategies (expectation bounds computed over mixture of functions) are left for forthcoming papers.

P-boxes are one of the simplest and most popular models of sets of
probability distributions, directly extending cumulative
distributions used in the precise case. P-boxes are often used in applications~\cite{KrieglerHeld05}, as they can be easily derived from small samples~\cite{BaudritDubois06} or from expert opinions expressed in terms of imprecise percentiles. consequently, our study is likely to be useful in many practical situations. P-box models can also be found in robust Bayesian analysis, where they are known as distribution band classes~\cite{BasuDasGupta95}. In other cases, the poor expressiveness of p-boxes compared to more general sets of probabilities is clearly a limitation~\cite{DesterckeAll07IJAR1}. However, as we shall see, their simplicity allows for more efficient computations, and they can provide quick first approximations. Eventually, if these first approximations already allow to take a decision, there is no need to consider more complex (and computationally demanding) models.

Methods developed in the paper are based on two different
approaches, and we found it interesting to emphasize similarities and
differences between these approaches, as well as how one approach
can help the other: the first is based on the fact that the computation of bounding expectations can be viewed as a linear programming problem, while the
second uses the fact that a p-box is a particular case of a random set~\cite{KrieglerHeld05,DesterckeAll07IJAR1}. Approximating lower and upper expectations with these approaches mainly consists in discretizing the uncertainty models. In this sense, they are different from other approaches discretizing the state space~\cite{ObermeierAugusti07,Troffaes08}.

We first state the general problem in Section~\ref{sec:probstat}, how to solve it by using linear programming and random sets, and introduce the problem of conditioning by an observed event. We then study the computation of lower/upper expectations of a function over the p-box for different behaviours. Going from the simplest case to the most general one, we start with monotone functions in Section~\ref{sec:mon-func}, pursue with functions having one extrema in Section~\ref{sec:maxunivar}, and finish by general (bounded) continuous functions in Section~\ref{sec:maxminunivar}.

\section{General problem statement \label{sec:probstat}}

We assume that the information about a (real-valued) random variable $X$ is (or
can be) represented by a lower
$\underline{F}$ and upper $\overline{F}$ cumulative probability distributions
defining the p-box $[\underline
{F},\overline{F}]$~\cite{FersonAll03}. Lower $\underline{F}$ and
upper $\overline{F}$ distributions thus define a set $\setpb$ of precise distributions such that
\begin{equation}
\setpb = \{ F | \forall
x\in\mathbb{R}, \ \underline{F}(x)\leq F(x)\leq\overline{F}(x) \}.
\label{NonMonF30}%
\end{equation}

Given a function $h(X)$, lower ($\underline{\mathbb{E}}$) and upper
($\overline{\mathbb{E}}$) expectations over
$[\underline{F},\overline{F}]$ of $h(X)$ can be computed by means of
a procedure sometimes called natural extension~\cite{Walley91,Walley96}, which
corresponds to the following equations:
\begin{align}
\underline{\mathbb{E}}(h)=\inf_{\df \in \setpb }\int_{\mathbb{R}}h(x)\mathrm{d}F  &  , \overline{\mathbb{E}%
}(h)=\sup_{\df \in \setpb }\int_{\mathbb{R}%
}h(x)\mathrm{d}F. \label{NonMonF32}%
\end{align}
Computing the lower (resp. upper) expectation can be seen as finding
the extremizing distribution $F$ inside $\setpb$ reaching the infimum (resp. supremum) in
Equations (\ref{NonMonF32}). If we consider the convex set of probabilities induced by $\setpb$, this is equivalent to find the extremum point (i.e., vertex) of this convex set where the bounds are reached, among all vertices (here infinitely many). Solving Equations~\eqref{NonMonF32} exactly is usually very difficult, although sometimes possible, even when analytical expressions of $h,\udf,\ldf$ are known. In practice, numerical methods must often be used to solve the problem and estimate both the upper and lower expectations. Upper and lower expectations are dual~\cite[ch.2.]{Walley96}, in the sense that $\underline{\mathbb{E}}(h)=-\overline{\mathbb{E}}(-h)$. This will allow us to concentrate only on the lower expectations for some cases studied in the sequel.  We now detail the two generic approaches used throughout the paper to solve the above problem. Note that, through all the paper, we assume that we restrict ourselves either to $\sigma$-additive probabilities or to continuous functions $h$, as such assumptions are not, from a practical standpoint, very limiting.

We will denote by $I_{A}$ the indicator function of the set $A$, that is the function such that $I_A(x)=1$ if $x \in A$, zero otherwise. The lower (resp. upper) expectation of this function, $\underline{\mathbb{E}}(I_A)$ (resp. $\overline{\mathbb{E}}(I_A)$), have the same value as the lower (resp. upper) probability $\underline{P}(A)$ (resp. $\overline{P}(A)$ of the event $A$ induced by the set $\setpb$.

\subsection{Linear programming view}
\label{sec:linprog-view}

Although we assume that the readers have basic knowledge of linear programming (for an introduction to the topic, see for example Vanderbei~\cite{Vanderbei07}), we will recall basic results coming from this theory when they are used in the paper. 

As sets of probabilities can be expressed through linear constraints over expectations, and as expectation is a linear functional, it is quite natural to translate Equations~\eqref{NonMonF32} into linear programs. The linear programs corresponding to lower expectation are summarized below.
\begin{equation*}
\begin{array}{c@{\hspace{0.5cm}}c@{\hspace{0.5cm}}c}
 \cline{1-1} \cline{3-3}
\textrm{\textbf{Primal problem:}} & &  \textrm{\textbf{Dual problem:}} \\
 \cline{1-1} \cline{3-3} \\ \textrm{Min.} \ \
\mathbf{v}=\int\limits_{-\infty}^{\infty}h\left(  x\right)  \rho\left(  x\right)
\mathrm{d}{x} & &
 \textrm{Max.} \ \
\mathbf{w}=c_{0}+\int\limits_{-\infty}^{\infty}\left(  -c\left(  t\right) \overline
{F}\left(  t\right)  +d\left(  t\right) \underline{F}\left(
t\right)
\right)  \mathrm{d}{t}\\  & & \\
\textrm{subject to} & & \textrm{subject to}\\ & & \\
\rho\left(  x\right)  \geq0, \int\limits\limits_{-\infty}^{\infty}\rho\left(
x\right)
\mathrm{d}{x}=1, & &
 c_{0}+\int\limits_{x}^{\infty}\left(  -c\left(  t\right)  +d\left(
t\right)
\right)  \mathrm{d}{t}\leq h\left(  x\right)  , \\
-\int\limits_{-\infty}^{x}\rho\left(  x\right) \mathrm{d}{x}\geq-\overline
{F}\left(  x\right)  ,& &
 c_{0}\in\mathbb{R}, c\left(  x\right)  \geq0, d\left(  x\right)  \geq0.\\
\int\limits_{-\infty}^{x}\rho\left(  x\right)
\mathrm{d}{x}\geq\underline{F}\left( x\right)  . & &\\ & &\\ \cline{1-1} \cline{3-3}
\end{array}
\end{equation*}
Where $\mathbf{v}$ and $\mathbf{w}$ are the objective functions to respectively minimize and maximize for the primal and dual problems, and $\rho\left(  x\right)$ is a probability density function having a cumulative distribution inside $\setpb$. Since both the primal and dual problems are feasible (i.e. have solutions satisfying their constraints), then their optimal solutions coincide (due to strong duality~\cite[Ch.5]{Vanderbei07}) and are equal to $\underline{\mathbb{E}}(h)$. 

Numerically solving the above problem can be done by approximating
the probability distribution function $F$ by a set of $N$ points
$F(x_{i})$, $i=1,...,N$, and by translating equations
(\ref{NonMonF32}) into the corresponding linear programming problem
with $N$ optimization variables and where constraints correspond to
equation (\ref{NonMonF30}). Those linear programming problems are of
the form
\begin{align} \label{eq:primal-approx}
\underline{\mathbb{E}}^{\ast}(h)
=\inf\sum_{k=1}^{N}\!h(x_{k})z_{k} \;
\text{ or } \; \overline{\mathbb{E}}^{\ast}(h) =\sup\sum_{k=1}%
^{N}\!h(x_{k})z_{k} %
\end{align}
subject to
\begin{align*}
&  z_{i}\geq0,\;\ i=1,...,N,\ \sum_{k=1}^{N}z_{k}=1,\\
&  \sum_{k=1}^{i}z_{k}\leq\overline{F}(x_{i}),\ \sum_{k=1}^{i}z_{k}%
\geq\underline{F}(x_{i}),\ i=1,...,N.
\end{align*}
where the $z_{k}$ are the optimization variables, and objective
function $\underline{\mathbb{E}}^{\ast}(h)$
(resp. $\overline{\mathbb{E}}^{\ast}(h)$) is an approximation of the lower
(resp. upper) expectation. Note that the primal problem may not always be feasible (e.g., consider $N=1$ and $\overline{F}(x_{1})-\underline{F}(x_{1})< 1$) if $N$ is too small or values $x_i$ are badly chosen.  Also, the inequality $\underline{\mathbb{E}}(h) \leq \underline{\mathbb{E}}^{\ast}(h)$ (or its converse) does not always hold when solving the above discretized problem. The approximated solution $\underline{\mathbb{E}}^{\ast}$ is thus not a guaranteed inner or outer approximation. A solution to obtain a guaranteed inner approximation is to replace, for $i=1,\ldots,N$,  $\underline{F}(x_{i})$ by $\underline{F}(x_{i+1})$ in constraints $\sum_{k=1}^{i}z_{k} \geq\underline{F}(x_{i})$, with $\underline{F}(x_{N+1})=1$, since in this case, any solution to the linear program would be such that, for any $x \in [x_i,x_{i+1}]$,
$$\underline{F}(x) \leq \underline{F}(x_{i+1}) \leq \sum_{k=1}^i z_k  \leq \overline{F}(x_i) \leq \overline{F}(x),$$
consequently the (discrete) cumulative distributions formed by the values $z_k$, $k=1,\ldots,N$ is in $\setpb$. However, for this linear program to have a solution, we must be able to choose the $x_i$, $i=1,\ldots,N$ on $\mathbb{R}$ such that $\overline{F}(x_i) \geq \underline{F}(x_{i+1})$. In addition to not be always possible, this puts necessary constraints over the chosen discretization of $\mathbb{R}$.

Let us write now the dual linear programming problem for computing
$\underline{\mathbb{E}}^{\ast\ast}(h)$, taking points $y_{i}$ different from
$x_{i}$,%
\begin{equation} 
\underline{\mathbb{E}}^{\ast\ast}(h)=\max\left(  c_{0}+\sum_{i=1}^{N}\left(
d_{i}\underline{F}\left(  y_{i}\right)  -c_{i}\overline{F}\left(
y_{i}\right)  \right)  \right)  \label{eq:NumerLowerprev2}%
\end{equation}
subject to $c_{0}\in\mathbb{R}$, $c_{i}\geq0$, $d_{i}\geq0$, and%
\[
c_{0}+\sum_{k=i}^{N}\left(  d_{k}-c_{k}\right)  \leq h(y_{i}),\ i=1,...,N,
\]
where $c_{0}$, $c_{i}$, $d_{i}$ are the optimization variables, $y_{i}%
=(x_{i-1}+x_{i})/2$. 

When both problems are discretized, equality between their optimal solutions no longer holds, but converge towards the same value as $N$ grows. To approximate the solution, one can let $N$ grow iteratively until the difference $\left\vert \underline{\mathbb{E}}^{\ast}(h)-\underline{\mathbb{E}}^{\ast\ast}(h)\right\vert$ is smaller than a given value $\varepsilon>0$ characterizing the accuracy of
the solutions. However, this way of determining the lower and upper
expectations meets some computation difficulties if many iterations are needed and if the value of
$N$ is rather large. Indeed, the primal optimization problem have $N$
variables and $3N+1$ constraints. On the other hand, solving the primal and dual approximated problems only once with a
small value of $N$ can lead to bad approximations
of the exact value. Also important is the question of how to choose or sample the values $x_i$ to improve numerical convergence? In other words, is there some regions that should be more sampled than others. A generic algorithm (for $\underline{\mathbb{E}}$) would look as follows:
\begin{enumerate}
\item Fix a precision threshold $\epsilon$ and an initial value of $N$
\item Sample $N$ values $x_i$ s.t. $\udf(x_i)>0$ and $\ldf(x_i)<1$
\item Compute $\underline{\mathbb{E}}^{\ast}(h)$ and $\underline{\mathbb{E}}^{\ast\ast}(h)$
\item If $\left\vert \underline{\mathbb{E}}^{\ast}(h)-\underline{\mathbb{E}}^{\ast\ast}(h)\right\vert \leq \epsilon$, stop, else increase $N$ and return to step 2.
\end{enumerate}

In the sequel, we will see that knowing $h$ and its behaviour can significantly improve both accuracy and efficiency of expectation bound computations. It also provides some insight as to how values $x_i$ could be sampled. 

\subsection{Random set view}

Now that we have given a global sketch of the linear programming
approach, we can detail the one using random sets. Formally, a
random set is a mapping $\Gamma$ from a probability space to the
power set $\wp(X)$ of another space $X$, also called a multi-valued
mapping. This mapping induces lower and upper probabilities on
$X$~\cite{Dempster67}. Here, we consider the unit interval
$[0,1]$ equipped with Lebesgue measure as the probability space, and $\wp(X)$ are
the measurable subsets of the real line $\mathbb{R}$.

Given the p-box
$[\underline{F},\overline{F}]$, we will denote
$A_{\gamma}=[a_{\ast\gamma},a_{\gamma}^{\ast}]$ the set such that
\begin{align*}
a_{\ast\gamma}:=\sup\{x\in \mathbb{R}:\overline{F}(x)<\gamma
\}=\overline{F}^{-1}(\gamma)  &  ,\\
a_{\gamma}^{\ast}:=\inf\{x\in \mathbb{R}:\underline{F}%
(x)>\gamma\}=\underline{F}^{-1}(\gamma)  &  ,
\end{align*}
\begin{center}
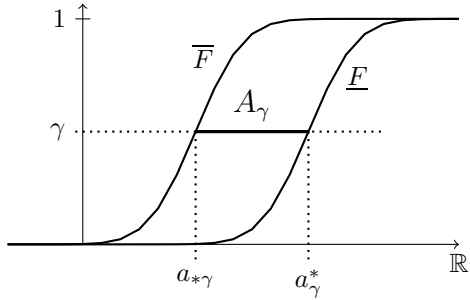
\begin{figure}[htb]
\centering
\begin{tikzpicture}

\draw[->] (-1,0) -- (5,0) node[below] {$\mathbb{R}$};
\draw[->] (0,0) -- (0,3.2);
\draw (-0.1,3) node[left] {$1$} -- (0.1,3);
\draw[domain=-1:5,thick]  plot[id=norm] function{3*norm((x-1.5)*2)} ;
\node at (1.6,2.5) {$\udf$};
\draw[domain=-1:5,thick]  plot[id=norm2] function{3*norm((x-3)*2)};

\draw[thick,dotted] (-0.1,1.5) node[left] {$\gamma$} -- (4,1.5) ; 
\draw[thick,dotted] (1.5,1.5) -- (1.5,-0.2) node[below] {$a_{\ast\gamma}$} ;
\draw[thick,dotted] (3,1.5) -- (3,-0.2) node[below] {$a^{\ast}_\gamma$};
\draw[very thick] (1.5,1.5) -- (3,1.5) node[midway,above] { \Large $A_{\gamma}$};
\node at (3.65,2.2) {$\ldf$};
\end{tikzpicture}
\caption{P-box as random set,
illustration}%
\label{fig:pboxrands}%
\end{figure}
\end{center}
By extending existing results~\cite{KrieglerHeld05,FersonAll03} to the
continuous real line~\cite{DesterckeAll07IJAR2,Alvarez06}, we can conclude that the p-box $[\underline
{F},\overline{F}]$ is equivalent to the continuous random set with a
uniform mass density on $[0,1]$ and a mapping (see
figure~\ref{fig:pboxrands}) such that
\[
\Gamma(\gamma)=A_{\gamma}=[a_{\ast\gamma},a_{\gamma}^{\ast}],\;\gamma\in
\lbrack0,1].
\]
Note that both $\overline{F}^{-1}(\gamma),\underline{F}^{-1}(\gamma)$ are non-decreasing functions of $\gamma$. The interest of this mapping $\Gamma$ is that it allows us to rewrite
equations (\ref{NonMonF32}) in the following form:
\begin{align}
&  \underline{\mathbb{E}}(h)=\int_{0}^{1}\inf_{x\in
A_{\gamma}}h(x)\;d\gamma
,\label{eq:Lowerprevleb}\\
&  \overline{\mathbb{E}}(h)=\int_{0}^{1}\sup_{x\in
A_{\gamma}}h(x)\;d\gamma.
\label{eq:upperprevleb}%
\end{align}

Again, finding analytical solutions of such integrals is not easy in the
general case, but numerical approximations can be computed (with more or less difficulty) by discretizing the p-box on a finite number
of levels $\gamma_{i}$, the main difficulty in the general case
being to find the infimum or supremum of $h(X)$ for each discretized
level.  Note that, in the finite case, a random set can be represented by non-null weights, here denoted $m$, given to subsets of space $X$ and summing up to one (i.e., $\sum_{E \subseteq X} m(E)=1$).  Let $\gamma_{0} = 0 \leq \gamma_{1}\leq \ldots \leq \gamma_{M} = 1$ and define the  discrete random set $\overline{\Gamma}$ such that for $i=1,\ldots,M$
$$\overline{\Gamma} := \left\{ \begin{array}{c} A_{\overline{\gamma_i}}=[a_{\ast\gamma_{i-1}},a_{\gamma_i}^{\ast}], 
 \\ m(A_{\overline{\gamma_i}})=\gamma_i - \gamma_{i-1} \end{array} \right.$$
We denote by $\setpb_{\overline{\Gamma}}$ the set of precise distributions induced by $\overline{\Gamma}$. This discretization, which is an outer approximation of the p-box $\pbox$ (i.e., $\setpb \subset \setpb_{\overline{\Gamma}}$), is sometimes referred to as the ODM (Outer discretization Method) and has been studied by other authors~\cite{Tonon08}. Working with $\overline{\Gamma}$, Equations~\eqref{eq:Lowerprevleb}, \eqref{eq:upperprevleb} can be rewritten as
$$ \underline{\mathbb{E}}^{\overline{\Gamma}}(h)= \sum_{i=1}^M m(A_{\overline{\gamma_i}}) \inf_{x \in A_{\overline{\gamma_i}}} h(x) \textrm{ and } \overline{\mathbb{E}}^{\overline{\Gamma}}(h)= \sum_{i=1}^M m(A_{\overline{\gamma_i}}) \sup_{x \in A_{\overline{\gamma_i}}} h(x).$$
Let us now define another discrete random set $\underline{\Gamma}$ such that for $i=1,\ldots,M$
$$\underline{\Gamma} := \left\{ \begin{array}{c} A_{\underline{\gamma_i}}= [a_{\ast\gamma_{i}},a_{\gamma_{i-1}}^{\ast}] \textrm{ if } a_{\ast\gamma_{i}}\leq a_{\gamma_{i-1}}^{\ast}, \ \ \emptyset \textrm{ otherwise } 
 \\ m(A_{\underline{\gamma_i}})=\gamma_i - \gamma_{i-1} \end{array} \right.$$
We denote by $\setpb_{\underline{\Gamma}}$ the set of precise distributions induced by $\underline{\Gamma}$. $\underline{\Gamma}$  is an inner approximation of the p-box (i.e., $\setpb_{\underline{\Gamma}} \subset \setpb$), and Equations\eqref{eq:Lowerprevleb}, \eqref{eq:upperprevleb} can again be rewritten
$$ \underline{\mathbb{E}}^{\underline{\Gamma}}(h)= \sum_{i=1}^M m(A_{\underline{\gamma_i}}) \inf_{x \in A_{\underline{\gamma_i}}} h(x) \textrm{ and } \overline{\mathbb{E}}^{\underline{\Gamma}}(h)= \sum_{i=1}^M m(A_{\underline{\gamma_i}}) \sup_{x \in A_{\underline{\gamma_i}}} h(x).$$
Note that when there is an index $i$ for which $A_{\underline{\gamma_i}}=\emptyset$, $\underline{\Gamma}$ does no longer describe a non-empty set of probabilities, and we will name such a random set inconsistent. This case can be compared to the case when the linear program giving guaranteed inner approximation has no feasible solutions. 

We have that  $\underline{\mathbb{E}}^{\overline{\Gamma}}(h) \leq \underline{\mathbb{E}}(h) \leq \underline{\mathbb{E}}^{\underline{\Gamma}}(h)$ (due to inclusions $\setpb_{\underline{\Gamma}} \subset \setpb \subset \setpb_{\overline{\Gamma}} $ ). Thus, to approximate the solution we can again let $M$ grow until $\vert \underline{\mathbb{E}}^{\overline{\Gamma}}(h) - \underline{\mathbb{E}}^{\underline{\Gamma}}(h) \vert$ is smaller than a given accuracy $\varepsilon>0$. As in the case of linear programming, choosing too few levels
$\gamma_{i}$ or using poor heuristics to find the infinimum/supremum over sets can lead to bad
approximations, and if those infinimum/supremum are hard to find, computational difficulties can arise. A generic algorithm (for $\underline{\mathbb{E}}$) using random sets would be as follows \begin{enumerate}
\item Fix a precision threshold $\epsilon$ and an initial value of $M$
\item Sample $M$ values $\gamma_i$
\item Compute $\underline{\mathbb{E}}^{\overline{\Gamma}}(h)$ and $\underline{\mathbb{E}}^{\underline{\Gamma}}(h)$
\item If $\vert \underline{\mathbb{E}}^{\overline{\Gamma}}(h) - \underline{\mathbb{E}}^{\underline{\Gamma}}(h) \vert \leq \epsilon$, stop, else increase $M$ and return to step 2.
\end{enumerate}
Note that the distance between two consecutive $\gamma_i,\gamma_{i+1}$ does not have to be constant. If $\underline{\Gamma}$ is inconsistent, an alternative is to use one of the two random sets $\Gamma_1,\Gamma_2$ such that for $i=1,\ldots,M$
$${\Gamma}_1 := \left\{ \begin{array}{c} A_{\gamma_{i,1}}=[a_{\ast\gamma_{i-1}},a_{\gamma_{i-1}}^{\ast}], 
 \\ m(A_{{\gamma_{i,1}}})=\gamma_i - \gamma_{i-1}, \end{array} \right. \quad {\Gamma}_2 := \left\{ \begin{array}{c}  A_{\gamma_{i,2}}=[a_{\ast\gamma_{i}},a_{\gamma_{i}}^{\ast}], 
 \\ m(A_{{\gamma_{i,2}}})=\gamma_i - \gamma_{i-1}. \end{array} \right.$$
The corresponding approximations read, for $j=1,2$,
$$ \underline{\mathbb{E}}^{\Gamma_j}(h)= \sum_{i=1}^M m(A_{\gamma_{i,j}}) \inf_{x \in A_{\gamma_{i,j}}} h(x) \textrm{ and } \overline{\mathbb{E}}^{\Gamma_j}(h)= \sum_{i=1}^M m(A_{\gamma_{i,j}}) \sup_{x \in A_{\gamma_{i,j}}} h(x).$$
Compared to $\underline{\Gamma}$,  $\Gamma_1,\Gamma_2$ have the advantage to always be consistent, but the obtained approximations can either outer- or inner-approximate the exact values, even if they converge towards it as $M$ increases.

\subsection{Conditional lower/upper expectations}

Another quite common problem when dealing with imprecise probabilities is the procedure of conditioning and the computations of associated lower/upper conditional expectations. Suppose that we observe an event $B=[b_{0},b_{1}]$. Then the lower and upper
conditional expectations, given the p-box $\pbox$ and under condition of $B$, can be determined as follows:%
\begin{align*}
\underline{\mathbb{E}}(h|B)  &  =\inf_{\underline{F}\leq F\leq\overline{F}%
}\frac{\int_{\mathbb{R}}h(x)I_{B}(x)\mathrm{d}F}{\int_{\mathbb{R}}%
I_{B}(x)\mathrm{d}F},\\
\overline{\mathbb{E}}(h|B)  &  =\sup_{\underline{F}\leq F\leq\overline{F}%
}\frac{\int_{\mathbb{R}}h(x)I_{B}(x)\mathrm{d}F}{\int_{\mathbb{R}}%
I_{B}(x)\mathrm{d}F}.
\end{align*}
The above formulas are equivalent to applying Bayes formula to every probability measure inside $\setpb$, and then retrieving the optimal bounds. Other generalisations of Bayes formula to imprecise probabilistic framework exist~\cite{DuboisPrade92c,Walley96}, but we will restrict ourselves to the above solution, as it is by far the most used within frameworks using lower/upper expectation bounds. Also, we assume that $B$ is large enough (or the two distributions $\pbox$ close enough) so that $\underline{F}(b_1) > \overline{F}(b_0)$. This is equivalent to require $\underline{P}(B) > 0$, thus avoiding conditioning on an event of probability $0$. Indeed, there are still some discussions about what should be done in presence of such events (see Miranda~\cite{Miranda08} for an introductory discussion and Cozman~\cite{Cozman02} for possible numerical solutions).

Similarly to unconditional expectations, the above problems can numerically be solved by
approximating the probability distribution function $F$ by a set of
$N$ points $F(x_{i})$, $i=1,...,N$, and by writing linear-fractional
optimization problems\footnote{Problems where the objective function is a fraction of two linear functions and constraints are linear.} and then associated linear programming problems. Problems
 mentioned for the unconditional case can again
occur. The next proposition indicates that previous results can be used to provide a more attractive formulation of $\underline{\mathbb{E}}(h|B),\overline{\mathbb{E}}(h|B)$.


\begin{proposition}
\label{prop:condexp} Given a p-box $\pbox$, a function $h(x)$ and an event $B$, the upper and lower conditional expectations
of $h(X)$
on $[\underline{F},\overline{F}]$ after observing the event $B$ can be written%
\begin{align}
\overline{\mathbb{E}}(h|B)=\sup_{\substack{\underline{F}(b_{0})\leq\alpha
\leq\overline{F}(b_{0})\\\underline{F}(b_{1})\leq\beta\leq\overline{F}(b_{1}%
)}}\frac{1}{\beta-\alpha}\Psi(\alpha,\beta), \label{eq:condexp-up}\\
\underline{\mathbb{E}}(h|B)=\inf_{\substack{\underline{F}(b_{0})\leq\alpha
\leq\overline{F}(b_{0})\\\underline{F}(b_{1})\leq\beta\leq\overline{F}(b_{1}%
)}}\frac{1}{\beta-\alpha}\Phi(\alpha,\beta), \label{eq:condexp-low}
\end{align}
with
\begin{align*}
\Psi(\alpha,\beta)  &  =\int_{\alpha}^{\beta}\sup_{x\in A_{\gamma} \cap B}h(x)\mathrm{d}\gamma.\\
\Phi(\alpha,\beta)  &  =\int_{\alpha}^{\beta}\inf_{x\in A_{\gamma} \cap B}h(x)\mathrm{d}\gamma.
\end{align*}
\end{proposition}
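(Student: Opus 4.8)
The plan is to reduce the conditional problem to the unconditional random-set formulas \eqref{eq:Lowerprevleb}--\eqref{eq:upperprevleb} by first \emph{fixing the masses that an admissible $F$ assigns to $(-\infty,b_0]$ and to $(-\infty,b_1]$}. I will carry out the argument for $\overline{\mathbb{E}}(h|B)$ only, since \eqref{eq:condexp-low} follows either by the mirror argument or from duality: $\underline{\mathbb{E}}(h|B)=-\overline{\mathbb{E}}(-h|B)$, together with $\sup_{x\in A_\gamma\cap B}(-h(x))=-\inf_{x\in A_\gamma\cap B}h(x)$, turns $\Psi$ for $-h$ into $-\Phi$ for $h$.

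\emph{Step 1 (reparametrisation and splitting of the supremum).} For $F\in\setpb$ the denominator is $\int_{\mathbb{R}}I_B\,dF=F(b_1)-F(b_0)$, so set $\alpha:=F(b_0)$ and $\beta:=F(b_1)$. The p-box constraints \eqref{NonMonF30} force $\underline{F}(b_0)\le\alpha\le\overline{F}(b_0)$ and $\underline{F}(b_1)\le\beta\le\overline{F}(b_1)$, while the standing hypothesis $\overline{F}(b_0)<\underline{F}(b_1)$ gives $\beta-\alpha>0$ for every admissible pair. Conversely, each pair $(\alpha,\beta)$ in that rectangle is attained by some $F\in\setpb$: on $(-\infty,b_0)$ take $F=\min(\overline{F},\alpha)$, on $(b_1,\infty)$ take $F=\max(\underline{F},\beta)$, and on $[b_0,b_1]$ take any non-decreasing curve lying between $\max(\alpha,\underline{F})$ and $\min(\beta,\overline{F})$ --- such a curve exists because both bounds are non-decreasing and, by the inequalities just listed, the lower one never exceeds the upper one. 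Since the objective and the denominator depend only on $F|_{[b_0,b_1]}$, and the admissible completions outside $[b_0,b_1]$ impose nothing further on $F|_{[b_0,b_1]}$ once $\alpha,\beta$ are fixed, the optimisation splits as
\[
\overline{\mathbb{E}}(h|B)=\sup_{\substack{\underline{F}(b_0)\le\alpha\le\overline{F}(b_0)\\\underline{F}(b_1)\le\beta\le\overline{F}(b_1)}}\frac{1}{\beta-\alpha}\,\sup\Bigl\{\textstyle\int_{[b_0,b_1]}h\,dF\;:\;F\in\setpb,\ F(b_0)=\alpha,\ F(b_1)=\beta\Bigr\}.
\]

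\emph{Step 2 (the inner supremum equals $\Psi(\alpha,\beta)$).} Fix $(\alpha,\beta)$ and divide the inner family by $\beta-\alpha$: the normalised restrictions $F|_{[b_0,b_1]}$ are precisely the probability measures of the p-box $[\underline{G},\overline{G}]$ on $[b_0,b_1]$ with $\underline{G}(x)=(\max(\alpha,\underline{F}(x))-\alpha)/(\beta-\alpha)$ and $\overline{G}(x)=(\min(\beta,\overline{F}(x))-\alpha)/(\beta-\alpha)$. Its $\gamma$-cut, for $\gamma\in(\alpha,\beta)$, is
\[
\bigl[\sup\{x\in B:\overline{F}(x)<\gamma\},\ \inf\{x\in B:\underline{F}(x)>\gamma\}\bigr]=[\max(b_0,a_{\ast\gamma}),\,\min(b_1,a_{\gamma}^{\ast})]=A_\gamma\cap B,
\]
a non-empty closed interval. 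Applying \eqref{eq:upperprevleb} to $[\underline{G},\overline{G}]$ and undoing the affine change of level $\gamma=\alpha+(\beta-\alpha)t$ shows that the inner supremum equals $\int_\alpha^\beta\sup_{x\in A_\gamma\cap B}h(x)\,d\gamma=\Psi(\alpha,\beta)$ (the endpoints $\gamma=\alpha,\beta$ contribute nothing to the integral). Substituting this into the display of Step~1 gives \eqref{eq:condexp-up}.

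\emph{Main obstacle.} The delicate part is Step~2: one must verify that the conditional family coincides \emph{exactly} with the scaled truncated p-box $[\underline{G},\overline{G}]$ --- that truncating at heights $\alpha,\beta$ neither discards admissible mass nor creates spurious distributions --- and that the generalised inverses behave as displayed when $\underline{F}$ or $\overline{F}$ is flat or jumps near $b_0,b_1$. Measurability of $\gamma\mapsto\sup_{x\in A_\gamma\cap B}h(x)$, needed for $\Psi$ to be well defined, is immediate since $h$ is continuous and the endpoints of $A_\gamma\cap B$ are monotone in $\gamma$. The remaining ingredients --- commuting the two suprema, existence of the completions of Step~1, positivity of $\beta-\alpha$ --- are routine under the stated hypotheses.
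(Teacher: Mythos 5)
Your proposal is correct and follows essentially the same route as the paper's proof: fix $\alpha=F(b_0)$ and $\beta=F(b_1)$, split the optimisation into an outer supremum over $(\alpha,\beta)$ and an inner supremum over distributions with those boundary values, and evaluate the inner supremum via the unconditional random-set formula, whose focal sets restricted to $B$ are exactly $A_\gamma\cap B$. Your Step~2 merely makes explicit (via the rescaled p-box $[\underline{G},\overline{G}]$) what the paper asserts in one line by appeal to ``the results obtained for the unconditional upper expectation.''
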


\begin{proof}
[\textbf{General proof}]We consider only upper expectation. We do
not know how the extremizing distribution function behaves outside the
interval $B$. Therefore, we suppose that the value of the extremizing
distribution function at point
$b_{0}$ is $F(b_{0})=\alpha\in\lbrack\underline{F}(b_{0}),\overline{F}%
(b_{0})]$ and its value at point $b_{1}$ is
$F(b_{1})=\beta\in\lbrack \underline{F}(b_{1}),\overline{F}(b_{1})]$
(see Fig. \ref{fig:NonMonF1}). Then
there holds%
\[
\int_{\mathbb{R}}I_{B}(x)\mathrm{d}F(x)=\beta-\alpha.
\]
Hence, we can write
\begin{align}
&  \ \overline{\mathbb{E}}(h|B) =\sup_{\substack{\underline{F}(b_{0}%
)\leq\alpha\leq\overline{F}(b_{0}) \\\underline{F}(b_{1})\leq\beta
\leq\overline{F}(b_{1}) \\\underline{F}\leq F\leq\overline{F}}}\frac{1}%
{\beta-\alpha}\int_{\mathbb{R}}h(x)I_{B}(x)\mathrm{d}F(x)\nonumber\\
&
=\sup_{\substack{\underline{F}(b_{0})\leq\alpha\leq\overline{F}(b_{0})
\\\underline{F}(b_{1})\leq\beta\leq\overline{F}(b_{1})}}\frac{1}{\beta-\alpha
}\left(  \sup_{\substack{\underline{F}\leq F\leq\overline{F} \\F(b_{0}%
)=\alpha\\F(b_{1})=\beta}}\int_{\mathbb{R}}h(x)I_{B}(x)\mathrm{d}F(x)\right)
\nonumber\\
&
=\sup_{\substack{\underline{F}(b_{0})\leq\alpha\leq\overline{F}(b_{0})
\\\underline{F}(b_{1})\leq\beta\leq\overline{F}(b_{1})}}\frac{1}{\beta-\alpha
}\int_{\alpha}^{\beta}\sup_{x\in A_{\gamma} \cap B}h(x)\mathrm{d}\gamma.
\label{eq:condupper}%
\end{align}
By using the results obtained for the unconditional
upper expectation, we can see that the integrand is equal to
$\Psi(\alpha,\beta)$. The lower expectation is similarly proved.
\end{proof}

As value $\beta-\alpha$
increases in Equations \eqref{eq:condexp-up}-\eqref{eq:condexp-low}, so do the numerator and denominator, thus playing
opposite role in the evolution of the objective function. Hence,
in order to compute the upper (resp. lower) conditional expectation, one has to
find the values $\beta$ and $\alpha$ such that any increase (decrease)
in the value $\beta-\alpha$ is greater (resp. lower) than the
corresponding increase (resp. decrease) in $\Psi(\alpha,\beta)$ ($\Phi(\alpha,\beta)$).

A crude algorithm to approximate the solution would be to samples different values $\alpha \in [\underline{F}(b_0),\overline{F}(b_{0})]$ and $\beta \in [\underline{F}(b_1),\overline{F}(b_{1})]$, evaluating Equations \eqref{eq:condexp-up}-\eqref{eq:condexp-low} for all combination $[\alpha,\beta]$ and retaining the highest obtained value (note that we can have $\overline{F}(b_{0})
\geq\underline{F}(b_{1})$, hence the need to make sure by adding constraint that $[\alpha,\beta]$ is not void). 

Another interesting point to note is that the proof takes advantage
of both views, since the idea to use levels $\alpha$ and $\beta$
comes from fractional linear programming, while the final equation
(\ref{eq:condupper}) can be elegantly formulated by using the random
set view.

In any cases (lower/upper and conditional/unconditional expectations), it is obvious that the extremizing probability distribution $F$ providing the minimum (resp. maximum) expectation of $h$
depends on the form of the function $h$. If this form follows some
typical cases, efficient solutions can be found to compute lower
(resp. upper) expectations. The simplest examples (for which solutions are
well known) of such typical cases are monotone functions.

\section{The simple case of monotone functions}
\label{sec:mon-func}

We first consider the case where $h$ is a monotone function that is non-decreasing
(resp. non-increasing) in $\mathbb{R}$. We will also introduce the running example used throughout the paper.

\subsection{Unconditional expectations}

In the case of a monotone non-decreasing (resp. non-increasing) function, existing results~\cite{Walley96} tell us that we have:
\begin{align}
&
\underline{\mathbb{E}}(h)=\int_{\mathbb{R}}\!\!h(x)\mathrm{d}\overline
{F}\;\left(  \underline{\mathbb{E}}(h)=\int_{\mathbb{R}}\!\!h(x)\mathrm{d}%
\underline{F}\right)  ,\label{NonMonF33}\\
&
\overline{\mathbb{E}}(h)=\int_{\mathbb{R}}\!\!h(x)\mathrm{d}\underline
{F}\;\left(  \overline{\mathbb{E}}(h)=\int_{\mathbb{R}}\!\!h(x)\mathrm{d}%
\overline{F}\right)  , \label{NonMonF34}%
\end{align}
and we see from (\ref{NonMonF33})-(\ref{NonMonF34}) that lower and
upper expectations are completely determined by bounding
distributions
$\underline{F}$ and $\overline{F}$. Using equations (\ref{eq:Lowerprevleb}%
)-(\ref{eq:upperprevleb}), we get the following formulas
\begin{align}
\underline{\mathbb{E}}(h)= \int_{0}^{1} h(a_{\ast\gamma})d\gamma
\;\left(
\underline{\mathbb{E}}(h)=\int_{0}^{1} h(a_{\gamma}^{\ast
})d\gamma\right)   &  ,\label{eq:Lowerprevmon}\\
\overline{\mathbb{E}}(h)=\int_{0}^{1}h(a_{\gamma}^{\ast}%
)d\gamma\;\left(
\overline{\mathbb{E}}(h)=\int_{0}^{1} h(a_{\ast
\gamma})d\gamma\right)   &  , \label{eq:upperprevmon}%
\end{align}
which are the counterparts of equations
(\ref{NonMonF33})-(\ref{NonMonF34}). Here, expectations are totally
determined by extreme values of the mappings. When $h$ is
non-monotone, equations (\ref{NonMonF33})-(\ref{eq:upperprevmon}) only
provide inner approximations of
$\underline{\mathbb{E}}(h)$,$\overline{\mathbb{E}}(h)$. When using numerical procedures over monotone functions, there appears to be no specific sampling strategies of values that would allow for faster convergence. 

We now introduce the example that will illustrate our results all along the paper.

\begin{example} \label{exmp:mono-uncon} Assume that we have to estimate the loss incurred by the failure of a unit of some industrial item. Suppose that this loss is the function of time $h(x)=20-x$,
and it is known that the unit time to failure is governed by a distribution
whose bounds are exponential distributions with a failure rate $0.2$ and
$0.5$ (note that only the bounds are of exponential nature). $h$ is decreasing and can, for example, model the fact that the later the unit fails, the less it costs to replace it. Let us compute the expected losses as the expectation of $h$. The lower
and upper distribution functions of the unit time to failure are
$1-\exp(-0.2x)$ and $1-\exp(-0.5x)$, respectively. Hence
\[
\overline{\mathbb{E}}(h)=\int_{0}^{\infty}(20-x)\mathrm{d}(1-\exp(-0.5x))=\int
_{0}^{\infty}(20-x)0.5e^{-0.5x}\mathrm{d}x=18,
\]%
\[
\underline{\mathbb{E}}(h)=\int_{0}^{\infty}(20-x)\mathrm{d}(1-\exp
(-0.2x))=\int_{0}^{\infty}(20-x)0.2e^{-0.2x}\mathrm{d}x=15.
\]
Finally, we obtain that the expected losses are in the interval $[15,\ 18]$.

Let us use the random set approach. Since $\overline{F}^{-1}(\gamma
)=-2\ln(1-\gamma)=a_{\gamma}^{\ast}$ and $\underline{F}^{-1}(\gamma
)=-5\ln(1-\gamma)=a_{\ast\gamma}$, then%
\[
\overline{\mathbb{E}}(h)=\int_{0}^{1}(20+2\ln(1-\gamma))\mathrm{d}\gamma=18,
\]%
\[
\underline{\mathbb{E}}(h)=\int_{0}^{1}(20+5\ln(1-\gamma))\mathrm{d}\gamma=15.
\]
We get the same values of the lower and upper expectations of $h$. 
\end{example}

\subsection{Conditional expectations}

We now consider that we want to know the lower and upper expectations in the case where event $B=[b_0,b_1]$ occurs. That is, we want to compute Equations~\eqref{eq:condexp-up},~\eqref{eq:condexp-low} for a monotone $h$. Lower and upper expectations are then given by the following proposition.

\begin{proposition}
 Given a p-box $\pbox$, a monotone function $h(x)$ and an event $B$, the upper and lower conditional expectation of $h(X)$
on $[\underline{F},\overline{F}]$ after observing the event $B$ can be written
\begin{align*}
\overline{\mathbb{E}}(h|B) & =\sup_{\substack{\underline{F}(b_{0})\leq
\alpha\leq\overline{F}(b_{0})\\\underline{F}(b_{1})\leq\beta\leq\overline
{F}(b_{1})}}\frac{1}{\beta-\alpha}\int_{\alpha}^{\beta}\sup_{x\in A_{\gamma} \cap B} h(x) \mathrm{d}\gamma \\
& =\frac{1}{\overline{F}(b_{1})-\overline{F}(b_{0}%
)}\left(\int_{\underline{F}^{-1}(\overline{F}(b_{0}))}^{b_{1}}h(x)\mathrm{d}%
\underline{F}(x)+h(b_{1})\left(  \overline{F}(b_{1})-\underline{F}%
(b_{1})\right) \right), \\
\underline{\mathbb{E}}(h|B) & =\inf_{\substack{\underline{F}(b_{0})\leq
\alpha\leq\overline{F}(b_{0})\\\underline{F}(b_{1})\leq\beta\leq\overline
{F}(b_{1})}}\frac{1}{\beta-\alpha}\int_{\alpha}^{\beta}\inf_{x\in A_{\gamma} \cap B} h(x)\mathrm{d}\gamma \\
& =\frac{1}{\underline{F}(b_{1})-\underline{F}(b_{0}%
)}\left( h(b_{0})\left(  \overline{F}(b_{0})-\underline{F}%
(b_{0})\right) + \int_{b_{0}}^{\overline{F}^{-1}(\underline{F}(b_1))}h(x)\mathrm{d}%
\overline{F}(x) \right)
\end{align*}
if $h$ is non-decreasing and
\begin{align*}
\overline{\mathbb{E}}(h|B) & =\frac{1}{\underline{F}(b_{1})-\underline{F}(b_{0}%
)}\left(h(b_{0})\left(  \overline{F}(b_{0})-\underline{F}%
(b_{0})\right) + \int_{b_{0}}^{\overline{F}^{-1}(\underline{F}(b_1))}h(x)\mathrm{d}%
\overline{F}(x) \right), \\
\underline{\mathbb{E}}(h|B) &  =\frac{1}{\overline{F}(b_{1})-\overline{F}(b_{0}%
)}\left(\int_{\underline{F}^{-1}(\overline{F}(b_{0}))}^{b_{1}}h(x)\mathrm{d}%
\underline{F}(x)+h(b_{1})\left(  \overline{F}(b_{1})-\underline{F}%
(b_{1})\right) \right), \end{align*}
if $h$ is non-increasing.
\end{proposition}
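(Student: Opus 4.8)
The plan is to reduce everything to Proposition~\ref{prop:condexp}, so that $\overline{\mathbb{E}}(h|B)$ and $\underline{\mathbb{E}}(h|B)$ become two-parameter optimizations of $\Psi(\alpha,\beta)/(\beta-\alpha)$ and $\Phi(\alpha,\beta)/(\beta-\alpha)$ over the rectangle $[\underline{F}(b_0),\overline{F}(b_0)]\times[\underline{F}(b_1),\overline{F}(b_1)]$. Monotonicity of $h$ lets me evaluate the inner $\sup/\inf$ over $A_\gamma\cap B$ explicitly, hence write $\Psi,\Phi$ in closed form; then I show the objective is monotone in each of $\alpha,\beta$ separately, so that the optimum sits at a corner of the rectangle; finally a Stieltjes change of variables produces the announced formulas.

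In detail, take $h$ non-decreasing. With $A_\gamma=[a_{\ast\gamma},a_{\gamma}^{\ast}]=[\overline{F}^{-1}(\gamma),\underline{F}^{-1}(\gamma)]$ we have $A_\gamma\cap B=[\max(a_{\ast\gamma},b_0),\min(a_{\gamma}^{\ast},b_1)]$, and the standing assumption $\underline{F}(b_1)>\overline{F}(b_0)$ makes this set non-empty for every $\gamma\in[\underline{F}(b_0),\overline{F}(b_1)]$, i.e. on the whole range relevant to $\Psi,\Phi$. Monotonicity then gives $\sup_{x\in A_\gamma\cap B}h(x)=h(\min(\underline{F}^{-1}(\gamma),b_1))$; since $\underline{F}^{-1}(\gamma)\leq b_1$ exactly when $\gamma\leq\underline{F}(b_1)$, and since $\alpha\leq\overline{F}(b_0)<\underline{F}(b_1)\leq\beta$, this splits as $\Psi(\alpha,\beta)=\int_{\alpha}^{\underline{F}(b_1)}h(\underline{F}^{-1}(\gamma))\,\mathrm{d}\gamma+h(b_1)(\beta-\underline{F}(b_1))$. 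Similarly $\inf_{x\in A_\gamma\cap B}h(x)=h(\max(\overline{F}^{-1}(\gamma),b_0))$, yielding $\Phi(\alpha,\beta)=h(b_0)(\overline{F}(b_0)-\alpha)+\int_{\overline{F}(b_0)}^{\beta}h(\overline{F}^{-1}(\gamma))\,\mathrm{d}\gamma$.

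For the optimization I would avoid differentiating the ratios and instead invoke the mediant inequality ($q/s\leq(p+q)/(r+s)\leq p/r$ whenever $q/s\leq p/r$ and $r,s>0$). Increasing $\beta$ by $\delta$ adds $h(b_1)\,\delta$ to $\Psi$ and $\delta$ to $\beta-\alpha$; since the current value $\Psi(\alpha,\beta)/(\beta-\alpha)$ is an average of numbers $h(\min(\underline{F}^{-1}(\gamma),b_1))\leq h(b_1)$, the mediant inequality shows the objective is non-decreasing in $\beta$. Increasing $\alpha$ to $\alpha'$ subtracts $\int_{\alpha}^{\alpha'}h(\underline{F}^{-1}(\gamma))\,\mathrm{d}\gamma$ from $\Psi$ and $\alpha'-\alpha$ from $\beta-\alpha$; the deleted ratio is $\leq h(\underline{F}^{-1}(\alpha'))\leq\Psi(\alpha',\beta)/(\beta-\alpha')$, so the objective is also non-decreasing in $\alpha$. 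Hence the supremum is attained at $\alpha=\overline{F}(b_0)$, $\beta=\overline{F}(b_1)$; the substitution $\gamma=\underline{F}(x)$ (carrying $[\overline{F}(b_0),\underline{F}(b_1)]$ onto $[\underline{F}^{-1}(\overline{F}(b_0)),b_1]$ and $\mathrm{d}\gamma$ into $\mathrm{d}\underline{F}(x)$) turns the remaining integral into $\int_{\underline{F}^{-1}(\overline{F}(b_0))}^{b_1}h(x)\,\mathrm{d}\underline{F}(x)$, which is the stated expression for $\overline{\mathbb{E}}(h|B)$. The analysis of $\Phi$ is symmetric and places the infimum at $\alpha=\underline{F}(b_0)$, $\beta=\underline{F}(b_1)$; the substitution $\gamma=\overline{F}(x)$ then gives the stated $\underline{\mathbb{E}}(h|B)$. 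The non-increasing case follows at once from the duality $\underline{\mathbb{E}}(h|B)=-\overline{\mathbb{E}}(-h|B)$ applied to the non-decreasing function $-h$, which simply interchanges the two displayed formulas.

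The main obstacle here is bookkeeping rather than conceptual: one must be careful with the generalized inverses $\underline{F}^{-1},\overline{F}^{-1}$ (which may be discontinuous or locally constant), verify $A_\gamma\cap B\neq\emptyset$ and the strict positivity of the denominators $\overline{F}(b_1)-\overline{F}(b_0)$ and $\underline{F}(b_1)-\underline{F}(b_0)$ throughout the relevant ranges, and justify passing between an ordinary $\gamma$-integral and a Stieltjes integral against $\mathrm{d}\underline{F}$ or $\mathrm{d}\overline{F}$ on flat pieces --- all of which go through thanks to the hypothesis $\underline{P}(B)>0$ and to the fact that the integrands depend on $x$ only through the monotone $h$.
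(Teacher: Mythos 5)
Your proposal is correct and follows essentially the same route as the paper's proof: reduce to Proposition~\ref{prop:condexp}, use the monotonicity of $h$ to evaluate $\sup_{x\in A_\gamma\cap B}h(x)$ and $\inf_{x\in A_\gamma\cap B}h(x)$ explicitly and hence write $\Psi(\alpha,\beta)$ and $\Phi(\alpha,\beta)$ in closed form, argue that the optimum sits at a corner of the rectangle $[\underline{F}(b_0),\overline{F}(b_0)]\times[\underline{F}(b_1),\overline{F}(b_1)]$, and finish with the change of variables $\gamma=\underline{F}(x)$ or $\gamma=\overline{F}(x)$. The only point where you diverge is the justification of corner optimality: the paper appeals somewhat informally to the integral mean value theorem (``choose $\alpha,\beta$ so that $z$ and $h(z)$ are as high as possible''), whereas your mediant-inequality argument proves monotonicity of the objective in $\alpha$ and in $\beta$ separately, which is a fully rigorous version of the same idea and arguably an improvement on the paper's sketch.
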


\begin{proof}
We will only prove the upper expectation for non-decreasing function $h$. Lower expectation can be derived likewise, and the case of non-increasing functions is then obtained by using duality between lower and upper expectations.

When $h$ is non-decreasing, we know that $\sup_{x\in A_{\gamma} \cap B} h(x)$ is a non-decreasing function of $\gamma$ that coincides with $\underline{F}^{-1}$. Using the integral mean value theorem, we know that there exists some $z\in[b_0,b_1]$ such that $\overline {\mathbb{E}}(h|B)=h(z)$, whatever the choice of $\alpha,\beta$. For maximizing
$\overline{\mathbb{E}}(h|B)$, values $\alpha,\beta$ should be chosen so that the retained values $z$ and $h(z)$ (coinciding with $\underline{F}^{-1}$) are as high as possible. As $h$ is non-decreasing, this corresponds to values $\alpha=\overline{F}%
(b_{0})$, $\beta=\overline{F}(b_{1})$, which settles the denominator of the objective function. We then have 
$$\int_{\alpha}^{\beta}\sup_{x\in A_{\gamma} \cap B} h(x) \mathrm{d}\gamma=\int_{\underline{F}^{-1}(\overline{F}(b_{0}))}^{b_{1}}h(x)\mathrm{d}%
\underline{F}(x)+h(b_{1})\left(  \overline{F}(b_{1})-\underline{F}%
(b_{1})\right),$$ because for values $\gamma \in [\overline{F}(b_{0}),\underline{F}(b_{1})]$, supremum of $h(x)$ on $A_{\gamma} \cap B$ is obtained for $x=\underline{F}^{-1}(\gamma)$, while for  $\gamma \in [\underline{F}(b_{1}),\overline{F}(b_{1})]$, supremum of $h(x)=b_1$.
\end{proof}

\begin{center}
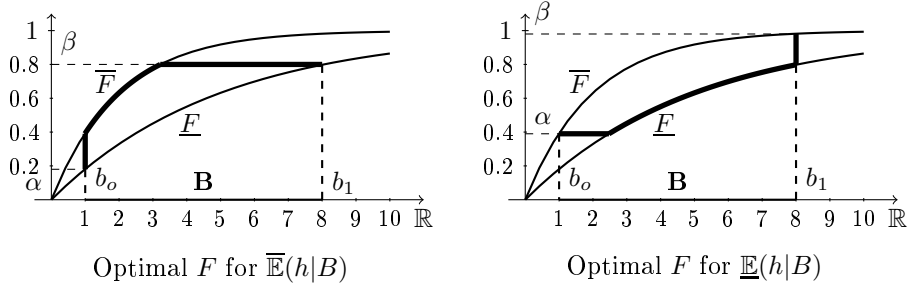
\begin{figure}[htb]
\centering
\begin{tikzpicture}[scale=0.45]

\draw[->] (-0.5,0) -- (11,0) node[below] {$\mathbb{R}$};
\draw[->] (0,0) -- (0,5.5);
\draw (-0.1,5) node[left] {$1$} -- (0.1,5);
\draw[domain=0:10,thick]  plot[id=exp1] function{5*(1-exp(-0.2*x))} ;
\node at (1.6,3.5) {$\udf$};
\draw[domain=0:10,thick]  plot[id=exp2] function{5*(1-exp(-0.5*x))};
\draw[dashed] (0,0.18*5) node[below left] {$\alpha$} -- (1,0.18*5);
\draw[dashed] (0,0.8*5) node[above right] {$\beta$} -- (8,0.8*5);
\draw[dashed,thick] (1,0.18*5) -- (1,0) node[above right] {$b_o$};
\draw[thick]  (1,0) -- (8,0) node[midway,above] {\textbf{B}};
\draw[dashed,thick] (8,0)  -- (8,0.8*5) node[very near start,right] {$b_1$};
\foreach \x in {1,...,10} {\draw (\x , 0.05) -- (\x , -0.05) node[below] {\small \x};}
\foreach \y in {0.2,0.4,0.6,0.8} {\draw (0.05 , \y*5) -- (-0.05 , \y*5) node[left] {\small \y};}
\draw[line width=2pt] (1,0.18*5) -- (1,0.39*5);
\draw[domain=0.99:3.218, line width=2pt]  plot[id=exp3] function{5*(1-exp(-0.5*x))} ;
\draw[line width=2pt] (3.218,0.8*5) -- (8,0.8*5);
\node at (4.1,2.2) {$\ldf$};
\node at (5,-2) {Optimal $\df$ for $\overline{\mathbb{E}}(h|B)$ };

\begin{scope}[xshift=14cm]
\draw[->] (-0.5,0) -- (11,0) node[below] {$\mathbb{R}$};
\draw[->] (0,0) -- (0,5.5);
\draw (-0.1,5) node[left] {$1$} -- (0.1,5);
\draw[domain=0:10,thick]  plot[id=exp4] function{5*(1-exp(-0.2*x))} ;
\node at (1.6,3.5) {$\udf$};
\draw[domain=0:10,thick]  plot[id=exp5] function{5*(1-exp(-0.5*x))};
\draw[dashed] (0,0.39*5) node[above right] {$\alpha$} -- (2.47,0.39*5);
\draw[dashed] (0,0.98*5) node[above right] {$\beta$} -- (8,0.98*5);
\draw[dashed,thick] (1,0.39*5) -- (1,0) node[above right] {$b_o$};
\draw[thick]  (1,0) -- (8,0) node[midway,above] {\textbf{B}};
\draw[dashed,thick] (8,0)  -- (8,0.98*5) node[very near start,right] {$b_1$};
\foreach \x in {1,...,10} {\draw (\x , 0.05) -- (\x , -0.05) node[below] {\small \x};}
\foreach \y in {0.2,0.4,0.6,0.8} {\draw (0.05 , \y*5) -- (-0.05 , \y*5) node[left] {\small \y};}
\draw[line width=2pt] (8,0.8*5) -- (8,0.98*5);
\draw[domain=2.471:8, line width=2pt]  plot[id=exp6] function{5*(1-exp(-0.2*x))} ;
\draw[line width=2pt] (1,0.39*5) -- (2.471,0.39*5);
\node at (4.1,2.2) {$\ldf$};
\node at (5,-2) {Optimal $\df$ for $\underline{\mathbb{E}}(h|B)$ };
\end{scope}
\end{tikzpicture}
\caption{Conditional expectations with monotone non-increasing functions}%
\label{fig:pboxcondmono}%
\end{figure}
\end{center}

\begin{example} \label{exmp:mono-cond} We consider the same p-box $\pbox$ and function $h$ as in Example~\ref{exmp:mono-uncon}, but now we consider that we want to know the incurred loss in case $x \in B=[1,8]$, that is the failure is supposed to happen between 1 and 8 units of time. We have
\[
\underline{F}(b_{0})=1-\exp(-0.2\cdot1)=0.18, \quad \overline{F}(b_{0})=1-\exp(-0.5\cdot1)=\allowbreak0.39,
\]%
\[
\underline{F}(b_{1})=1-\exp(-0.2\cdot8)=\allowbreak0.8, \quad
\overline{F}(b_{1})=1-\exp(-0.5\cdot8)=\allowbreak0.98,
\]
and we get 
\begin{align*}\overline{\mathbb{E}}(h|B) & =\frac{1}{0.8-0.18} \left((20-1)\left(  0.39 - 0.18 \right) + \int_{1}^{\overline{F}^{-1}(0.8)}(20-x)0.5e^{-0.5x}\mathrm{d}x \right) \\
& = 18.298, \\ 
\underline{\mathbb{E}}(h|B) & =\frac{1}{0.98-0.39} \left((20-8)\left(  0.98 - 0.8 \right) + \int_{\underline{F}^{-1}(0.39)}^{8}(20-x)0.2e^{-0.2x}\mathrm{d}x \right) \\
& = 14.219.
\end{align*}
Note that, if we compare above values with those of Example~\ref{exmp:mono-uncon}, we have $[\underline{\mathbb{E}}(h),\overline{\mathbb{E}}(h)] \subset [\underline{\mathbb{E}}(h|B),\overline{\mathbb{E}}(h|B)]$.
\end{example}

The above results indicate that, when $h$ is monotone, computing lower/upper expectations exactly remains easy. Also, when using numerical methods, they provide insight as to how values should be sampled. For example, when computing upper conditional expectation by linear programming, values only need to be sampled in $[b_0,\udf^{-1}(b_1)]$, and $b_0$ should be among the sampled values, since an important probability mass is concentrated at this value (see Fig.~\ref{fig:pboxcondmono}). When using random set approach and discretizing the unit interval $[0,1]$, one should take $\gamma_1=\ldf{b_0}$ and $\gamma_2=\udf(b_0)$, and not consider finer discretization of this interval, as this would not increase the precision. As we shall see, similar results can be derived for more complex cases.

\section{Function with one maximum \label{sec:maxunivar}}

In this section, we study the case where the function $h$ has one
maximum at point $a$, i.e. $h$ is increasing (resp. decreasing) in
$(-\infty,a]$ (resp. $[a,\infty )$). The case of $h$ having one minimum follows by considering the function $
-h$ and the duality between lower and upper expectations.

\subsection{Unconditional expectations}

As for monotone $h$, we first study the case of unconditional expectations. Before giving the main result, we show the next lemma that will be useful in subsequent proofs.

\begin{lemma}\label{lem:equationsol}
Given a p-box $\pbox$ and a continuous function $h(x)$ with one maximum at $x=a$, there is always a solution $\gamma \in [\ldf(a),\udf(a)]$ to the following equation
\begin{equation}
h\left(  \overline{F}^{-1}(\gamma)\right)  =h\left(  \underline{F}^{-1}%
(\gamma)\right)  . %
\end{equation}
\end{lemma}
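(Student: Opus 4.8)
The plan is to use the intermediate value theorem applied to a suitably defined auxiliary function. Define $g(\gamma) = h(\overline{F}^{-1}(\gamma)) - h(\underline{F}^{-1}(\gamma))$ on the interval $[\underline{F}(a), \overline{F}(a)]$, and show that $g$ changes sign (weakly) over this interval, so that a zero must exist. The key observation driving the choice of interval is that $\overline{F}^{-1}(\gamma) = a_{\ast\gamma}$ and $\underline{F}^{-1}(\gamma) = a_{\gamma}^{\ast}$ are both non-decreasing in $\gamma$, and that for $\gamma \le \overline{F}(a)$ we have $a_{\ast\gamma} \le a$, while for $\gamma \ge \underline{F}(a)$ we have $a_{\gamma}^{\ast} \ge a$. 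Hence at $\gamma = \underline{F}(a)$ one endpoint of $A_\gamma$ sits at (or to the right of) $a$, and at $\gamma = \overline{F}(a)$ the other endpoint sits at (or to the left of) $a$.

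First I would evaluate $g$ at the two endpoints. At $\gamma = \overline{F}(a)$, since $a_{\ast\gamma} = \overline{F}^{-1}(\overline{F}(a)) \le a$ and $h$ is increasing on $(-\infty, a]$, we get $h(\overline{F}^{-1}(\gamma)) \le h(a)$; but also $\underline{F}^{-1}(\gamma) = a_{\gamma}^{\ast} \ge a_{\underline{F}(a)}^{\ast} \ge a$ (using $\overline{F}(a) \ge \underline{F}(a)$ and monotonicity), so $h(\underline{F}^{-1}(\gamma)) \le h(a)$ because $h$ is decreasing on $[a,\infty)$. This alone does not settle the sign, so I would instead compare directly to $h(a)$: the cleaner route is to note that $h(\overline{F}^{-1}(\overline{F}(a))) = h(a)$ when $\overline{F}$ is strictly increasing and continuous at $a$ — but since the paper only assumes continuity, I would argue via the supremum/infimum characterisations of $a_{\ast\gamma}$ and $a_{\gamma}^{\ast}$ to get $h(\overline{F}^{-1}(\overline{F}(a))) \ge h(a) \ge h(\underline{F}^{-1}(\overline{F}(a)))$, hence $g(\overline{F}(a)) \ge 0$. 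Symmetrically, at $\gamma = \underline{F}(a)$ one gets $\underline{F}^{-1}(\underline{F}(a)) \le a$ (so $h(\underline{F}^{-1}(\gamma))$ can be as large as $h(a)$) and $\overline{F}^{-1}(\underline{F}(a)) \le a$, yielding $g(\underline{F}(a)) \le 0$.

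Then I would invoke continuity of $g$: both $\overline{F}^{-1}$ and $\underline{F}^{-1}$ are non-decreasing, hence have at most countably many discontinuities, which is a slight wrinkle — so rather than claiming $g$ is continuous outright, I would use the standard "intermediate value property for monotone-composed functions" argument, or handle jump discontinuities by noting that at a jump of $\overline{F}^{-1}$ the p-box $\overline{F}$ has a flat, and one can still locate $\gamma$ where the two sides cross, possibly with $\overline{F}^{-1}(\gamma)$ and $\underline{F}^{-1}(\gamma)$ evaluated as one-sided limits. Given the paper's standing assumption of $\sigma$-additivity or continuity of $h$, the honest and economical approach is: assume $\overline{F}, \underline{F}$ continuous (or pass to their continuous parts), so that $\overline{F}^{-1}, \underline{F}^{-1}$ are continuous, whence $g$ is continuous and the intermediate value theorem gives a root $\gamma \in [\underline{F}(a), \overline{F}(a)]$.

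The main obstacle I anticipate is precisely the regularity bookkeeping: $\overline{F}^{-1}$ and $\underline{F}^{-1}$ as defined (via $\sup\{x : \overline{F}(x) < \gamma\}$ and $\inf\{x : \underline{F}(x) > \gamma\}$) are only guaranteed non-decreasing, not continuous, and the composition $h \circ \overline{F}^{-1}$ need not be continuous even if $h$ is, if $\overline{F}$ has flat portions or jumps. So the delicate step is to justify that the "crossing value" $\gamma$ genuinely exists rather than the two curves leapfrogging past each other at a shared discontinuity. I expect this to be resolved cleanly by a monotonicity-plus-IVT argument on the closed interval, using that $h(\overline{F}^{-1}(\gamma))$ is non-decreasing in $\gamma$ on $[\underline{F}(a),\overline{F}(a)]$ (since $\overline{F}^{-1}(\gamma) \le a$ there and $h$ increases up to $a$) while $h(\underline{F}^{-1}(\gamma))$ is non-increasing in $\gamma$ on the same interval (since $\underline{F}^{-1}(\gamma) \ge a$ there and $h$ decreases past $a$); two oppositely-monotone functions with $g(\underline{F}(a)) \le 0 \le g(\overline{F}(a))$ must agree somewhere, and this can be extracted without full continuity by taking $\gamma^{\star} = \sup\{\gamma : g(\gamma) \le 0\}$ and checking both one-sided limits pin $g$ to zero there.
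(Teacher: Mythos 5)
Your proof is essentially the paper's: both define $\varphi(\gamma)=h(\overline{F}^{-1}(\gamma))-h(\underline{F}^{-1}(\gamma))$, check $\varphi(\underline{F}(a))\le 0$ and $\varphi(\overline{F}(a))\ge 0$ using that $h$ attains its maximum at $a$, and invoke the intermediate value theorem, with the paper (like your ``economical'' route) simply assuming the composed functions are continuous. One caution on your fallback argument: two oppositely monotone functions satisfying the endpoint sign conditions need \emph{not} agree anywhere if jumps are allowed --- their difference is monotone and can leap over zero at a flat of $\overline{F}$ or $\underline{F}$ --- so the discontinuous case must genuinely be excluded by hypothesis rather than rescued by the $\sup\{\gamma: g(\gamma)\le 0\}$ device.
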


\begin{proof}
let us
consider the function%
\[
\varphi\left(  \alpha\right)  =h\left(  \overline{F}^{-1}\left(
\alpha\right)  \right)  -h\left(  \underline{F}^{-1}\left(
\alpha\right) \right)  ,
\]
\newline which, being a substraction of two continuous functions (by
supposition), is continuous. Since the function $h$ has its maximum
at point $x=a$, then, by taking $\alpha=\underline{F}\left( a\right)
$, we get the
inequality%
\[
\varphi\left(  \gamma\right)  =h\left(  \overline{F}^{-1}\left(
\underline {F}\left(  a\right)  \right)  \right)  -h\left(  a\right)
\leq0
\]
and, by taking $\gamma=\overline{F}\left(  a\right)  $, we get the inequality%
\[
\varphi\left(  \gamma\right)  =h\left(  a\right)  -h\left(  \underline{F}%
^{-1}\left(  \overline{F}\left(  a\right)  \right)  \right)  \geq0.
\]
Consequently, there exists $\gamma$ in the interval $\left(
\underline {F}\left(  a\right)  ,\overline{F}\left(  a\right)
\right)  $ such that $\varphi\left(  \gamma\right)  =0$ (since
$\varphi$ is continuous). 
\end{proof}

The next proposition shows that, as for monotone $h$, the fact of knowing that $h$ has one maximum in $x=a$ allows us to derive closed-form expressions of lower and upper expectations. The results of the proposition are illustrated in Fig.~\ref{fig:onemax-uncon}.

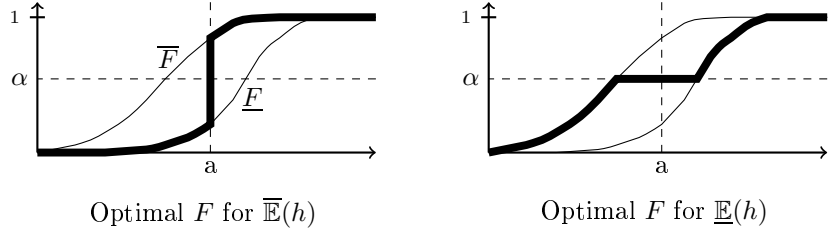
\begin{figure}
\begin{center}
\begin{tikzpicture}

\draw[thick, ->] (0,0) -- (45mm,0mm) ; \draw[thick]
(-1mm,18mm) node[left] {$\scriptstyle 1$} -- (1mm,18mm) ;
\draw[thick, ->] (0,0) -- (0mm,20mm) ;



\draw (0,0) -- (5mm,1mm) .. controls (7.5mm,1.75mm) ..
(10.75mm,3.75mm) .. controls (12.5mm,5mm) .. (14.5mm,7mm) --
(17mm,9.8mm) -- (20mm,12.75mm) node[left] {$\overline{F}$} -- (23mm,15.25mm) -- (26mm,17.2mm) ..
controls (27.5mm,17.8mm) .. (32.2mm,18mm) -- (37mm,18mm) --
(45mm,18mm) ;

\draw (0,0) -- (9mm,0mm) -- (14.5mm,0.4mm) .. controls
(15.75mm,0.6mm) .. (17mm,1mm) -- (20mm,2mm) .. controls
(21.5mm,2.75mm) .. (23mm,3.75mm) -- (26mm,7mm) node[right] {$\underline{F}$} -- (27.7mm,9.8mm) --
(29.5mm,12.75mm) .. controls (30.8mm,14.3mm) .. (32.2mm,15.25mm) ..
controls (34.6mm,17.2mm) .. (37mm,18mm) -- (45mm,18mm) ;


\draw[line width=3pt] (0,0) -- (9mm,0mm) -- (14.5mm,0.4mm) ..
controls (15.75mm,0.6mm) .. (17mm,1mm) -- (20mm,2mm) .. controls
(21.5mm,2.75mm) .. (23mm,3.75mm) -- (23mm,15.25mm) -- (26mm,17.2mm)
.. controls (27.5mm,17.8mm) .. (32.2mm,18mm) -- (37mm,18mm) --
(45mm,18mm) ;

\draw[dashed] (23mm,20mm) -- (23mm,0mm) node[below] {a} ;

\draw[dashed] (45mm,9.8mm) -- (0mm,9.8mm) node[left] {$\alpha$} ;

\begin{scope}[xshift=6cm]

\draw[thick, ->] (0,0) -- (45mm,0mm) ; \draw[thick]
(-1mm,18mm) node[left] {$\scriptstyle 1$} -- (1mm,18mm) ;
\draw[thick, ->] (0,0) -- (0mm,20mm) ;



\draw (0,0) -- (5mm,1mm) .. controls (7.5mm,1.75mm) ..
(10.75mm,3.75mm) .. controls (12.5mm,5mm) .. (14.5mm,7mm) --
(17mm,9.8mm) -- (20mm,12.75mm) -- (23mm,15.25mm) -- (26mm,17.2mm) ..
controls (27.5mm,17.8mm) .. (32.2mm,18mm) -- (37mm,18mm) --
(45mm,18mm) ;

\draw (0,0) -- (9mm,0mm) -- (14.5mm,0.4mm) .. controls
(15.75mm,0.6mm) .. (17mm,1mm) -- (20mm,2mm) .. controls
(21.5mm,2.75mm) .. (23mm,3.75mm) -- (26mm,7mm) -- (27.7mm,9.8mm) --
(29.5mm,12.75mm) .. controls (30.8mm,14.3mm) .. (32.2mm,15.25mm) ..
controls (34.6mm,17.2mm) .. (37mm,18mm) -- (45mm,18mm) ;

\draw[line width=3pt] (0,0) -- (5mm,1mm) .. controls (7.5mm,1.75mm)
.. (10.75mm,3.75mm) .. controls (12.5mm,5mm) .. (14.5mm,7mm) --
(17mm,9.8mm) -- (27.7mm,9.8mm) -- (29.5mm,12.75mm) .. controls
(30.8mm,14.3mm) .. (32.2mm,15.25mm) .. controls (34.6mm,17.2mm) ..
(37mm,18mm) -- (45mm,18mm) ;

\draw[dashed] (23mm,20mm) -- (23mm,0mm) node[below] {a} ;

\draw[dashed] (45mm,9.8mm) -- (0mm,9.8mm) node[left] {$\alpha$} ;

\end{scope}

\draw (2.2,-0.8) node {Optimal $F$ for
$\overline{\mathbb{E}}(h)$} ;
\begin{scope}[xshift=6cm]
\draw (2.2,-0.8) node {Optimal $F$ for
$\underline{\mathbb{E}}(h)$ } ;
\end{scope}
\end{tikzpicture}
\end{center} 
\caption{Optimal distributions $F$ with unimodal $h$}
\label{fig:onemax-uncon}
\end{figure}

\begin{proposition}
\label{pr:NonMonF1}If the function $h$ has one maximum at point
$a\in \mathbb{R}$, then the upper and lower expectations of $h(X)$
on $[\underline
{F},\overline{F}]$ are%
\begin{equation}
\overline{\mathbb{E}}(h)
=\int\limits_{-\infty}^{a}h(x)\mathrm{d}\underline{F}
+h(a)\left[ \overline{F}(a)-\underline {F}(a)\right]
  +\int\limits_{a}%
^{\infty}h(x)\mathrm{d}\overline{F}, \label{NonMonF35}%
\end{equation}%
\begin{equation}
\underline{\mathbb{E}}(h)=\left[  \int\limits_{-\infty}^{\overline{F}%
^{-1}(\alpha)}h(x)\mathrm{d}\overline{F}+\int\limits_{\underline{F}%
^{-1}(\alpha)}^{\infty}h(x)\mathrm{d}\underline{F}\right]  , \label{NonMonF36}%
\end{equation}
or, equivalently
\begin{align}
\overline{\mathbb{E}}(h)  &  =\int\limits_{0}^{\underline{F}%
(a)}h(a_{_{\gamma}}^{\ast})d\gamma+[{\overline{F}(a)}-{\underline{F}%
(a)}]h(a)
+\int\limits_{\overline{F}(a)}^{1}h(a_{_{\ast\gamma}
})d\gamma \label{eq:unilebupper}%
\end{align}%
\begin{equation}
\underline{\mathbb{E}}(h)=\int\limits_{0}^{\alpha}h(a_{\ast\gamma}%
)d\gamma+\int\limits_{\alpha}^{1}h(a_{\gamma}^{\ast})d\gamma,
\label{eq:unileblower}%
\end{equation}
where $\alpha$ is the solution of equation
\begin{equation}
h\left(  \overline{F}^{-1}(\alpha)\right)  =h\left(  \underline{F}^{-1}%
(\alpha)\right)  . \label{NonMonF38}%
\end{equation}
such that $\alpha \in [\ldf(a),\udf(a)]$.
\end{proposition}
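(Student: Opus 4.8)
The plan is to feed the unimodality of $h$ into the random-set formulas \eqref{eq:Lowerprevleb}--\eqref{eq:upperprevleb}, i.e.\ $\underline{\mathbb{E}}(h)=\int_0^1\inf_{x\in A_\gamma}h(x)\,\mathrm d\gamma$ and $\overline{\mathbb{E}}(h)=\int_0^1\sup_{x\in A_\gamma}h(x)\,\mathrm d\gamma$, where $A_\gamma=[a_{\ast\gamma},a_\gamma^\ast]=[\udf^{-1}(\gamma),\ldf^{-1}(\gamma)]$. The first step is to locate, as a function of $\gamma$, where the supremum and infimum of $h$ over $A_\gamma$ sit. Note that $a\in A_\gamma$ exactly when $\gamma\in[\ldf(a),\udf(a)]$. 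Since $h$ is non-decreasing on $(-\infty,a]$ and non-increasing on $[a,\infty)$: for $\gamma<\ldf(a)$ the whole of $A_\gamma$ lies to the left of $a$, so the supremum is reached at the right endpoint $a_\gamma^\ast$ and the infimum at the left endpoint $a_{\ast\gamma}$; for $\gamma>\udf(a)$ the interval lies to the right of $a$, so these roles are swapped; and for $\gamma\in[\ldf(a),\udf(a)]$ the supremum is simply $h(a)$ while the infimum is $\min\{h(a_{\ast\gamma}),h(a_\gamma^\ast)\}$. Substituting these three cases into the formula for $\overline{\mathbb{E}}(h)$ gives \eqref{eq:unilebupper} directly, the middle piece contributing $h(a)[\udf(a)-\ldf(a)]$.

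For $\underline{\mathbb{E}}(h)$ one still has to decide which endpoint realizes the infimum when $\gamma\in[\ldf(a),\udf(a)]$. Set $\psi(\gamma):=h(\udf^{-1}(\gamma))-h(\ldf^{-1}(\gamma))=h(a_{\ast\gamma})-h(a_\gamma^\ast)$. On $[\ldf(a),\udf(a)]$ we have $a_{\ast\gamma}\le a\le a_\gamma^\ast$, and since $\gamma\mapsto a_{\ast\gamma}$ is non-decreasing and stays in the region where $h$ is non-decreasing, $\gamma\mapsto h(a_{\ast\gamma})$ is non-decreasing; symmetrically $\gamma\mapsto h(a_\gamma^\ast)$ is non-increasing. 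Hence $\psi$ is continuous and non-decreasing on this interval, with $\psi(\ldf(a))\le 0\le\psi(\udf(a))$; this is precisely the configuration of Lemma~\ref{lem:equationsol}, which supplies a zero $\alpha\in[\ldf(a),\udf(a)]$ of $\psi$, i.e.\ a solution of \eqref{NonMonF38}. For $\gamma\le\alpha$ we get $\psi(\gamma)\le 0$, so the infimum over $A_\gamma$ is $h(a_{\ast\gamma})$; for $\gamma\ge\alpha$ it is $h(a_\gamma^\ast)$ (the value does not depend on the particular zero chosen, since $h(a_{\ast\gamma})=h(a_\gamma^\ast)$ wherever $\psi=0$). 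This agrees with the two outer regions already computed, so the split at $\alpha$ is global, and plugging into the formula for $\underline{\mathbb{E}}(h)$ yields \eqref{eq:unileblower}.

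It then remains to rewrite the $\gamma$-integrals \eqref{eq:unilebupper}, \eqref{eq:unileblower} in the Lebesgue--Stieltjes forms \eqref{NonMonF35}, \eqref{NonMonF36}. This is just a change of variable: an integral of the form $\int_c^d h(\ldf^{-1}(\gamma))\,\mathrm d\gamma$ becomes $\int_{\ldf^{-1}(c)}^{\ldf^{-1}(d)}h(x)\,\mathrm d\ldf(x)$ under $\gamma=\ldf(x)$, and similarly with $\udf$; applying this to each of the three (resp.\ two) pieces, and using $\ldf^{-1}(\ldf(a))=a$, $\udf^{-1}(\udf(a))=a$ up to the harmless ambiguity on flat parts of the bounding c.d.f.'s (where the corresponding Stieltjes mass vanishes because $h$ is continuous), produces \eqref{NonMonF35} and \eqref{NonMonF36}. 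The only delicate points are the monotonicity argument for $\psi$ on $[\ldf(a),\udf(a)]$ and the bookkeeping with the generalized inverses $\udf^{-1},\ldf^{-1}$ — half-open versus closed levels, flat and jump parts of $\ldf,\udf$ — when identifying when $a$ enters $A_\gamma$ and when changing variables; everything else is routine once unimodality and Lemma~\ref{lem:equationsol} are in place.
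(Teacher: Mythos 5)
Your proof is correct and follows essentially the same route as the paper's second proof of this proposition (the one based on the random-set formulas \eqref{eq:Lowerprevleb}--\eqref{eq:upperprevleb}); the paper additionally gives an independent proof via linear-programming duality, which you do not need. Your monotonicity argument for $\psi$ on $[\ldf(a),\udf(a)]$ is in fact a slightly more careful justification of where the infimum switches from $h(a_{\ast\gamma})$ to $h(a_{\gamma}^{\ast})$ than the paper's own wording, which merely notes that the infimum is attained at one of the two endpoints and appeals to the solution of \eqref{NonMonF38} to resolve the choice.
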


\begin{proof}
[\textbf{Proof using linear programming}] We assume that the function
$h\left( x\right)  $ is differentiable in $\mathbb{R}$ and has a
finite value as $x\rightarrow\infty$. The lower and upper cumulative
probability functions $\underline{F}$ and $\overline{F}$ are also assumed to be
differentiable. We also consider the primal and dual problems considered in Section~\ref{sec:linprog-view} and recalled below.

\begin{equation*}
\begin{array}{c@{\hspace{0.5cm}}c@{\hspace{0.5cm}}c}
 \cline{1-1} \cline{3-3}
\textrm{\textbf{Primal problem:}} & &  \textrm{\textbf{Dual problem:}} \\
 \cline{1-1} \cline{3-3} \\ \textrm{Min.} \ \
\mathbf{v}=\int\limits_{-\infty}^{\infty}h\left(  x\right)  \rho\left(  x\right)
\mathrm{d}{x} & &
 \textrm{Max.} \ \
\mathbf{w}=c_{0}+\int\limits_{-\infty}^{\infty}\left(  -c\left(  t\right) \overline
{F}\left(  t\right)  +d\left(  t\right) \underline{F}\left(
t\right)
\right)  \mathrm{d}{t}\\  & & \\
\textrm{subject to} & & \textrm{subject to}\\ & & \\
\rho\left(  x\right)  \geq0, \int\limits\limits_{-\infty}^{\infty}\rho\left(
x\right)
\mathrm{d}{x}=1, & &
 c_{0}+\int\limits_{x}^{\infty}\left(  -c\left(  t\right)  +d\left(
t\right)
\right)  \mathrm{d}{t}\leq h\left(  x\right)  , \\
-\int\limits_{-\infty}^{x}\rho\left(  x\right) \mathrm{d}{x}\geq-\overline
{F}\left(  x\right)  ,& &
 c_{0}\in\mathbb{R}, c\left(  x\right)  \geq0, d\left(  x\right)  \geq0.\\
\int\limits_{-\infty}^{x}\rho\left(  x\right)
\mathrm{d}{x}\geq\underline{F}\left( x\right)  . & &\\ & &\\ \cline{1-1} \cline{3-3}
\end{array}
\end{equation*}

The proof of Equations (\ref{NonMonF35})-(\ref{NonMonF36}) and
(\ref{NonMonF38}) can be separated in three main steps:

\begin{enumerate}
\item We propose a feasible solution of the primal problem. 
\item We then consider the feasible solution of the dual problem corresponding
to the one proposed for the primal problem. 
\item We show that the two solutions coincide and, therefore, according to the
basic duality theorem of linear programming, these solutions are
optimal ones.
\end{enumerate}

First, we consider the primal problem. Let $a^{\prime}$ and
$a^{\prime\prime}$ be real values. The function
\[
\rho\left(  x\right)  =\left\{
\begin{array}
[c]{cc}%
\mathrm{d}\overline{F}\left(  x\right)  /\mathrm{d}x, & x<a^{\prime}\\
0, & a^{\prime}\leq x\leq a^{\prime\prime}\\
\mathrm{d}\underline{F}\left(  x\right)  /\mathrm{d}x, &
a^{\prime\prime}<x
\end{array}
\right.
\]
is a feasible solution to the primal problem if the following
conditions are
respected:%
\[
\int_{-\infty}^{\infty}\rho\left(  x\right)  \mathrm{d}{x}=1,
\]
which, given the above solution, can be rewritten
\[
\int_{-\infty}^{a^{\prime}}\mathrm{d}\overline{F}+\int_{a^{\prime\prime}%
}^{\infty}\mathrm{d}\underline{F}=1,
\]
which is equivalent to the equality
\begin{equation}
\overline{F}\left(  a^{\prime}\right)  =\underline{F}\left(
a^{\prime\prime
}\right)  .\label{prduco2}%
\end{equation}
We now interest ourselves in the dual problem. Let us first consider
the sole constraint
\begin{equation}
c_{0}+\int_{x}^{\infty}\left(  -c\left(  t\right)  +d\left( t\right)
\right)  \mathrm{d}{t}\leq h\left(  x\right)  ,\label{prduco1}%
\end{equation}
which is the equivalent of the primal constraint $\rho\left(
x\right)  \geq 0$. We then consider the following feasible solution
to the dual problem as $c_{0}=h\left(  \infty\right)  $,
\[
c\left(  x\right)  =\left\{
\begin{array}
[c]{l@{\hspace{0.2cm}}l}%
h^{\prime}\left(  x\right)  , & x<a^{\prime}\\
0, & x\geq a^{\prime}%
\end{array}
\right.  d\left(  x\right)  =\left\{
\begin{array}
[c]{l@{\hspace{0.2cm}}l}%
0, & x<a^{\prime\prime}\\
-h^{\prime}\left(  x\right)  , & x\geq a^{\prime\prime}%
\end{array}
\right.  .
\]
\newline The inequalities $c\left(  x\right)  \geq0$ and $d\left(  x\right)
\geq0$ are valid provided we have the inequalities $a^{\prime}\leq
a\leq a^{\prime\prime}$ (i.e. interval
$[a^{\prime},a^{\prime\prime}]$ encompasses maximum of $h$). By
integrating $c\left(  x\right)  $ and $d\left(  x\right)
$, we get the increasing function%
\[
C\left(  x\right)  =-\int_{x}^{\infty}c\left(  t\right)  \mathrm{d}%
{t}=\left\{
\begin{array}
[c]{ll}%
h\left(  x\right)  -h\left(  a^{\prime}\right)  , & x<a^{\prime}\\
0, & x\geq a^{\prime}%
\end{array}
\right.
\]
and the decreasing function%
\[
D\left(  x\right)  =\int_{x}^{\infty}d\left(  t\right)
\mathrm{d}{t}=\left\{
\begin{array}
[c]{ll}%
h\left(  a^{\prime\prime}\right)  -h\left(  \infty\right)  , &
x<a^{\prime
\prime}\\
h\left(  x\right)  -h\left(  \infty\right)  , & x\geq a^{\prime\prime}%
\end{array}
\right.  .
\]
\newline Let us rewrite condition (\ref{prduco1}) as follows:%
\begin{equation}
c_{0}+C\left(  x\right)  +D\left(  x\right)  \leq h\left(  x\right)
.\label{eq:dualconst}%
\end{equation}
If $x<a^{\prime}$, equation (\ref{eq:dualconst}) becomes 
\[
c_{0}+h\left(  x\right)  -h\left(  a^{\prime}\right)  +h\left(
a^{\prime \prime}\right)  -h\left(  \infty\right)  \leq h\left( x\right)
.
\]
And, replacing the inequality by an equality (simply taking the upper bound of the constraint), we obtain %
\begin{equation}
h\left(  a^{\prime\prime}\right)  =h\left(  a^{\prime}\right)
.\label{prduco3}%
\end{equation}
If $a^{\prime}<x<a^{\prime\prime}$, we have $c_{0}+h\left(
a^{\prime\prime }\right)  -h\left(  \infty\right)  \leq h\left(
x\right)  $ which means that for all $x\in\left(
a^{\prime},a^{\prime\prime}\right)  $ we have $h\left(
a^{\prime\prime}\right)  (=h\left(  a^{\prime}\right)  )\leq h\left(
x\right)  $ (i.e. $h\left(  a^{\prime\prime}\right)  $ and
$a^{\prime}$ are the minimal values of the function $h\left(
x\right)  $ in interval $x\in\left(
a^{\prime},a^{\prime\prime}\right)  $.) If $x\geq a^{\prime
\prime}$, then we get the trivial equality $c_{0}+h\left(  x\right)
-h\left( \infty\right)  =h\left(  x\right)  $. The two proposed
solutions are valid iff there exist solutions to Eq. (\ref{prduco2}) and Eq. (\ref{prduco3}), respectively for the primal and dual
problem. That such solutions exist can be seen by considering Lemma\ref{lem:equationsol} and taking $a^{\prime}=\overline{F}^{-1}\left(  \gamma\right)  $ and
$a^{\prime\prime}=\underline{F}^{-1}\left(  \gamma\right)  $, with $\gamma$ the solution of Eq.~\eqref{NonMonF38}. We then find the admissible
values of the objective functions%
\[
v_{\min}=\int_{0}^{a^{\prime}}h\left(  x\right)  \mathrm{d}\overline{F}%
+\int_{a^{\prime\prime}}^{\infty}h\left(  x\right)
\mathrm{d}\underline{F},
\]%
\[
w_{\max}=c_{0}+\int_{0}^{\infty}\left(  -c\left(  t\right) \overline
{F}\left(  t\right)  +d\left(  t\right) \underline{F}\left(
t\right) \right)  \mathrm{d}{t}.
\]
By using integration by parts together with equations (\ref{prduco2}%
)-(\ref{prduco3}), we can show that equality $w_{\max}=v_{\min}$
holds, with $\gamma$ the particular solution of equation
(\ref{NonMonF38}) for which optimum is reached, as was to be proved.
\end{proof}

\begin{proof}
[\textbf{Proof using random sets}]Let us now consider equations
(\ref{eq:upperprevleb})-(\ref{eq:Lowerprevleb}). Looking first at
equation (\ref{eq:upperprevleb}), we see that before
$\gamma=\underline{F}(a)$, the supremum of $h$ on $A_{\gamma}$ is
$h(a^{*}_{\gamma})$, since $h$ is increasing between $[\infty,a]$.
Between $\gamma=\underline{F}(a)$ and $\gamma=\overline{F}(a)$, the
supremum of $h$ on $A_{\gamma}$ is $f(a)$. After
$\gamma=\overline{F}(a)$, we can make the same reasoning as for the
increasing part of $h$ (except that it is now decreasing). Finally,
this gives us the
following formula:%
\begin{equation}
\overline{\mathbb{E}}(h) =  \int\limits_{0}^{\underline{F}(a)}
 h(a^{*}_{\gamma}) d\gamma+ 
\int\limits_{\underline{F}(a)}^{\overline {F}(a)}  h(a)
d\gamma+  \int\limits_{\overline{F}(a)}^{1}  h(a_{*
\gamma}) d\gamma
\end{equation}
which is equivalent to (\ref{eq:unilebupper}). Let us now turn to
the lower expectation. Before $\gamma=\underline{F}(a)$ and after
$\gamma=\overline {F}(a)$, finding the infinimum is again not a
problem (it is respectively $h(a_{* \gamma})$ and
$h(a^{*}_{\gamma})$). Between $\gamma=\underline{F}(a)$ and
$\gamma=\overline{F}(a)$, since we know that $h$ is increasing
before $x=a$ and decreasing after, infinimum is either $h(a_{*
\gamma})$ or $h(a^{*}_{\gamma})$. This gives us equation
\begin{equation}
 \underline{\mathbb{E}}h = 
\int\limits_{0}^{\underline{F}(a)} h(a_{* \gamma}) d\gamma+
\int\limits_{\underline{F}(a)}^{\overline{F}(a)} \min(h(a_{*
\gamma}),h(a^{*}_{\gamma})) d\gamma+
\int\limits_{\overline{F}(a)}^{1} h(a^{*}_{\gamma}) d\gamma
\label{eq:unilowerappr}%
\end{equation}
and if we use equations (\ref{prduco2}),(\ref{prduco3}) as in the
first proof (reasoning used in the first proof to show that they
have a solution is general, and thus applicable here), we know that
there is a level $\alpha$ s.t.
$h(\overline{F}^{-1}(\alpha))=h(\underline{F}^{-1}(\alpha))$, and
for which the above equation simplify in equation (\ref{NonMonF38}).
\end{proof}

Figure~\ref{fig:onemax-uncon} shows that the extremizing distribution corresponding to upper expectation consists in concentrating as much probability mass as possible on the maximum, as could have been expected, while the cumulative distribution reaching the lower expectation consists of an horizontal jump avoiding higher values. As we shall see, finding the level $\alpha$ satisfying Equation (\ref{prduco2}) and at which this jump occurs is sometimes feasible, and in this case exact lower and upper expectations can be found. In other cases, when computing the upper expectation by numerical methods and linear programming, results indicate that it is important to include the value $a$ corresponding to the maximum of $h$ in the sampled value, as well as values close to it when computing the upper expectation. When using the random set approach, they show that there are no need to consider values $\gamma$ inside the interval $[\ldf(a),\udf(a)]$, the bounds being sufficient. For the lower expectation, results indicate that when using linear programming, it is preferable to sample outside the interval $[\udf^{-1}(\alpha),\ldf^{-1}(\alpha)]$.

However, it can happens that the exact value of $\alpha$ cannot be computed, but that the integrals in Eq.\eqref{NonMonF35}-\eqref{NonMonF36} can still be solved. In this case, lower and upper expectations have to be approximated, for example by  scanning  a more or less wide range of possible values for $\alpha$ (see~\cite{Utkin06} for an example).

\begin{example} \label{exmp:onemax-uncon}We still consider the same p-box as in Example~\ref{exmp:mono-uncon}, but we now suppose that the loss is modelled by the function $h(x)=60-(x-5)^{2}$. This loss function can express the idea that it is preferable for the unit to fail when it begins to work or when it has worked for a long time, rather than when it works at full capacity, as the cost of slowing a whole production line would then be quite higher. $h$ has one maximum at $a=5$, and we get
\begin{align*}
\overline{\mathbb{E}}h  &  =h(5)\left[  \overline{F}(5)-\underline
{F}(5)\right]  +\int_{0}^{5}h(x)\mathrm{d}\underline{F}(x)+\int_{5}^{\infty
}h(x)\mathrm{d}\overline{F}(x)\\
&  =60\cdot\left(  \exp(-0.2\cdot5)-\exp(-0.5\cdot5)\right)  +31.321+4.268\\
&  =52.736.
\end{align*}
Since $\overline{F}^{-1}(\alpha)=-2\ln(1-\alpha)$ and $\underline{F}%
^{-1}(\alpha)=-5\ln(1-\alpha)$, then $\alpha$ can be found by solving the
following equality
\[
60-(-2\ln(1-\alpha)-5)^{2}=60-(-5\ln(1-\alpha)-5)^{2}.
\]
Hence, we have two solutions $\alpha=1-\exp(-10/7)$ and $\alpha=0$. Since
$\overline{F}^{-1}(0)=\underline{F}^{-1}(0)$, then the second solution has to
be removed. Therefore, we get $\alpha=1-\exp(-10/7)=\allowbreak0.76$. Hence,
we obtain
\begin{align*}
\underline{\mathbb{E}}h  &  =\int_{-\infty}^{-2\ln(1-0.76)}h(x)\mathrm{d}%
\overline{F}(x)+\int_{-5\ln(1-0.76)}^{\infty}h(x)\mathrm{d}\underline{F}(x)\\
&  =\int_{-\infty}^{2.85}\left(  60-(x-5)^{2}\right)  0.5e^{-0.5x}%
\mathrm{d}x+\int_{7.\,\allowbreak14}^{\infty}\left(  60-(x-5)^{2}\right)
0.2e^{-0.2x}\mathrm{d}x\\
&  =29.745.
\end{align*}
Finally, we obtain the interval of expected losses $[29.745,$ $52.736]$. Using the random set approach, we get 
\begin{align*}
\overline{\mathbb{E}}(h)    = &\int\limits_{0}^{1-\exp
(-0.5\cdot5)}\left(  60-(-5\ln(1-\gamma)-5)^{2}\right)
\mathrm{d}\gamma  +h(5)\left[  \overline{F}(5)-\underline{F}(5)\right]  \\
&  +\int\limits_{1-\exp(-0.2\cdot5)}^{1}\left(
60-(-2\ln(1-\gamma)-5)^{2}\right)  \mathrm{d}\gamma\\
 = & \ \ 52.736.
\end{align*}%
\begin{align*}
\underline{\mathbb{E}}(h)   = & \int\limits_{0}^{\allowbreak0.76}\left(
60-(-5\ln(1-\gamma)-5)^{2}\right)  \mathrm{d}\gamma+\int\limits_{0.76}%
^{1}\left(  60-(-2\ln(1-\gamma)-5)^{2}\right)  \mathrm{d}\gamma\\
   = & \ \ 29.745.
\end{align*}
\end{example}

If the function $h$ is symmetric about $a$, i.e., the equality
$h(a-x)=h(a+x) $ is valid for all $x\in\mathbb{R}$, then the value
of $\alpha$ in (\ref{NonMonF38}) does not depend on $h$ and is
determined as
\[
a-\overline{F}^{-1}(\alpha)=\underline{F}^{-1}(\alpha)-a.
\]
Note that expressions (\ref{NonMonF33}),(\ref{NonMonF34}) can be
obtained from (\ref{NonMonF35}),(\ref{NonMonF36}) by taking
$a\rightarrow\infty$.

\subsection{Conditional expectations}

We now consider conditioning by an event $B=[b_0,b_1]$, while $h$ is still assumed to have one maximum. The following proposition indicates how lower and upper conditional expectations can be computed in this case.

\begin{proposition} If the function $h$ has one maximum at point
$a\in\mathbb{R}$, then the upper and lower conditional expectations
of $h(X)$
on $[\underline{F},\overline{F}]$ after observing the event $B$ are%
\begin{align*}
\overline{\mathbb{E}}(h|B)=\sup_{\substack{\underline{F}(b_{0})\leq\alpha
\leq\overline{F}(b_{0})\\\underline{F}(b_{1})\leq\beta\leq\overline{F}(b_{1}%
)}}\frac{1}{\beta-\alpha}\Psi(\alpha,\beta),\\
\underline{\mathbb{E}}(h|B)=\inf_{\substack{\underline{F}(b_{0})\leq\alpha
\leq\overline{F}(b_{0})\\\underline{F}(b_{1})\leq\beta\leq\overline{F}(b_{1}%
)}}\frac{1}{\beta-\alpha}\Phi(\alpha,\beta),
\end{align*}
with
\begin{align*}
\Psi(\alpha,\beta)  
= & \ \ I_{(\alpha<\underline{F}^{-1}(a))}\int_{\underline
{F}^{-1}(\alpha)}^{a}h(x)\mathrm{d}\underline{F} 
+I_{(\beta   >\overline{F}^{-1}(a))}\int_{a}^{\overline{F}^{-1}(\beta
)}h(x)\mathrm{d}\overline{F}\\
& \ \  +h(a)\left(
\min(\overline{F}(a),\beta)-\max(\underline{F}(a),\alpha
)\right) \\
\Phi(\alpha,\beta)    = & \ \  h(b_{0})\left(
\overline{F}(b_{0})-\alpha\right)
+\int_{b_{0}}^{\overline{F}^{-1}(\varepsilon)}h(x)\mathrm{d}\overline{F}\\
& \ \  +h(b_{1})\left(  \beta-\underline{F}(b_{1})\right)
+\int_{\underline
{F}^{-1}(\varepsilon)}^{b_{1}}h(x)\mathrm{d}\underline{F}
\end{align*}
Here $I_{(a<b)}$ is the indicator function taking $1$ if $a<b$ and $0$
if $a\geq b$; $\varepsilon$ is one of the roots of the following
equation:
\begin{equation} \label{eq:epsi-cond}
h\left(  \overline{F}^{-1}(\varepsilon)\right)  =h\left(  \underline{F}%
^{-1}(\varepsilon)\right)  .
\end{equation}

\end{proposition}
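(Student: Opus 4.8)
The plan is to combine Proposition~\ref{prop:condexp}, which already reduces the conditional expectations to optimizing $\frac{1}{\beta-\alpha}\Psi(\alpha,\beta)$ and $\frac{1}{\beta-\alpha}\Phi(\alpha,\beta)$ over the rectangle $[\ldf(b_0),\udf(b_0)]\times[\ldf(b_1),\udf(b_1)]$, with the structural analysis of the unconditional unimodal case carried out in Proposition~\ref{pr:NonMonF1} and Lemma~\ref{lem:equationsol}. So the first step is to write, for fixed $\alpha,\beta$, an explicit formula for $\int_\alpha^\beta \sup_{x\in A_\gamma\cap B}h(x)\,d\gamma$ and $\int_\alpha^\beta \inf_{x\in A_\gamma\cap B}h(x)\,d\gamma$. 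For the $\sup$ integrand: the maximizer of $h$ over $A_\gamma\cap B=[\max(a_{\ast\gamma},b_0),\min(a^\ast_\gamma,b_1)]$ is $\overline F^{-1}(\gamma)$ while $\gamma<\ldf(a)$ (only the increasing branch of $h$ is visible, and $a_{\ast\gamma}=\overline F^{-1}(\gamma)$ is the rightmost point), is $h(a)$ while $\ldf(a)\le\gamma\le\udf(a)$ (the maximum $a$ lies inside $A_\gamma$), and is $\underline F^{-1}(\gamma)$ for $\gamma>\udf(a)$ — modulo the clipping by $B$, which is exactly what produces the indicator factors $I_{(\alpha<\underline F^{-1}(a))}$ and $I_{(\beta>\overline F^{-1}(a))}$ and the $\min/\max$ with $\ldf(a),\udf(a)$ in the stated $\Psi$. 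Integrating branchwise and changing variables ($\gamma=\overline F(x)$ on the first piece, $\gamma=\underline F(x)$ on the last) yields the three-term expression for $\Psi(\alpha,\beta)$.

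The second step is the optimization over $(\alpha,\beta)$ for the upper expectation. Here I would argue as in the monotone case: by the integral mean value theorem $\overline{\mathbb E}(h|B)=\Psi(\alpha,\beta)/(\beta-\alpha)=h(z)$ for some $z$ in the range of the $\sup$-integrand, which is contained in $[b_0,b_1]$, and to maximize one wants $z$ as close as possible to the argmax region of $h$. Because the integrand $\sup_{x\in A_\gamma\cap B}h(x)$ is itself nondecreasing then nonincreasing in $\gamma$ with plateau value $h(a)$, enlarging the interval $[\alpha,\beta]$ to its extreme $[\ldf(b_0),\udf(b_0)]\to$ wait — actually the monotonicity is subtler, so the cleanest route is to note that the stated $\Psi(\alpha,\beta)$ is, as a function of $\alpha$, nondecreasing until $\alpha$ reaches $\ldf(a)$-ish and then the averaged quantity behaves monotonically; one shows by direct differentiation in $\alpha$ (resp.\ $\beta$) that the derivative of $\Psi(\alpha,\beta)/(\beta-\alpha)$ has constant sign on the feasible rectangle, so the supremum is attained at a corner or edge determined by where $h(\overline F^{-1}(\cdot))$ and $h(\underline F^{-1}(\cdot))$ sit relative to $h(a)$. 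This is why the answer is still stated as a $\sup$ over the rectangle rather than a closed form: unlike the monotone case, the optimal corner depends on the position of $a$ relative to $B$, and the formula for $\Psi$ already encodes all the cases via its indicators.

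The third step handles $\Phi(\alpha,\beta)$ and the lower expectation. On $A_\gamma\cap B$ the infimum of a unimodal $h$ is attained at an endpoint, so $\inf_{x\in A_\gamma\cap B}h(x)=\min\big(h(\max(a_{\ast\gamma},b_0)),\,h(\min(a^\ast_\gamma,b_1))\big)$; the two candidate endpoints are $\overline F^{-1}(\gamma)$ (clipped up to $b_0$) and $\underline F^{-1}(\gamma)$ (clipped down to $b_1$), and by Lemma~\ref{lem:equationsol} applied on the restricted picture there is a crossover level $\varepsilon$ solving \eqref{eq:epsi-cond} at which $h(\overline F^{-1}(\varepsilon))=h(\underline F^{-1}(\varepsilon))$; below $\varepsilon$ the left endpoint wins, above it the right one. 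Splitting the integral at $\varepsilon$, using the $B$-clipping to produce the boundary terms $h(b_0)(\overline F(b_0)-\alpha)$ and $h(b_1)(\beta-\underline F(b_1))$, and changing variables on each remaining piece gives the stated $\Phi(\alpha,\beta)$; one then observes this expression is independent of whether $\alpha,\beta$ are pushed to the rectangle corners except through the explicit linear terms, so the $\inf$ over the rectangle is again what is claimed. Finally the case of $h$ having one minimum follows by the duality $\underline{\mathbb E}(h|B)=-\overline{\mathbb E}(-h|B)$ already invoked in Section~\ref{sec:probstat}.

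I expect the main obstacle to be the bookkeeping in steps two and three: carefully tracking how the clipping by $B=[b_0,b_1]$ interacts with the three branches of the unimodal integrand — in particular justifying the indicator functions $I_{(\alpha<\underline F^{-1}(a))}$, $I_{(\beta>\overline F^{-1}(a))}$ (which record whether the interval $[\alpha,\beta]$ actually reaches the plateau on each side) and confirming that no additional edge cases arise when $\overline F(b_0)\ge\underline F(b_1)$ or when $a\notin B$. The analytic content (continuity, the mean value theorem, existence of the crossover level $\varepsilon$) is all inherited from Lemma~\ref{lem:equationsol} and the unconditional proposition; the work is in the case analysis and the change-of-variables computations, which I would only sketch rather than carry out in full.
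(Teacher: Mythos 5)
Your proposal is correct and follows essentially the same route as the paper, whose proof is literally the one-liner ``apply Proposition~\ref{prop:condexp} and substitute the unimodal formulas of Proposition~\ref{pr:NonMonF1} for $\Psi$ and $\Phi$''; your branch-by-branch evaluation of $\int_\alpha^\beta \sup_{x\in A_\gamma\cap B}h\,\mathrm{d}\gamma$ and $\int_\alpha^\beta \inf_{x\in A_\gamma\cap B}h\,\mathrm{d}\gamma$ just makes explicit the work the paper leaves implicit. Note that your second step is not needed: the proposition leaves the outer optimization over $(\alpha,\beta)$ as a $\sup/\inf$, so nothing about the location of the optimal corner has to be established here.
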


\begin{proof}
The proof follows from Proposition~\ref{prop:condexp} where $\Psi(\alpha,\beta),\Phi(\alpha,\beta)$ are respectively replaced by formulas given in Proposition~\ref{pr:NonMonF1}.
\end{proof}

\begin{example} \label{exmp:onemax-cond} We consider the same $h$ as in Example~\ref{exmp:onemax-uncon}, the same p-box $\pbox$ as in the other examples, and the conditioning event $B=[1,8]$. From Example~\ref{exmp:onemax-uncon}, the solutions of Eq.~\eqref{eq:epsi-cond} are $\varepsilon=1-\exp(-10/7)=\allowbreak0.76$, $\underline{F}^{-1} (\varepsilon)=7.14$, $\overline{F}^{-1}(\varepsilon)=2.85$. We also have $a=5$, $\underline{F}(a)=1-\exp(-0.2\cdot5)=\allowbreak0.63$, $\overline{F}(a)=1-\exp(-0.5\cdot
5)=\allowbreak0.92\allowbreak$. Let us first concentrate on 
\[
\overline{\mathbb{E}}(h|B)=\sup_{\substack{0.18\leq\alpha\leq0.39\\0.8\leq
\beta\leq0.98}}\frac{1}{\beta-\alpha}\Psi(\alpha,\beta),
\]
where 
\begin{align*}
\Psi(\alpha,\beta) =&  \ \ I_{(\alpha<0.63)}\int_{-5\ln(1-\alpha)}^{5}\left(
60-(x-5)^{2}\right)  0.2e^{-0.2x}\mathrm{d}x\\
+ & \ \ I_{(\beta   >0.92)}\int_{5}^{-2\ln(1-\beta)}\left(  60-(x-5)^{2}\right)
0.5e^{-0.5x}\mathrm{d}x\\
+ & \ \ 60\left(  \min(1-e^{-0.5\cdot5},\beta)-\max(1-e^{-0.2\cdot5}%
,\alpha)\right)  \\
=&  \ \ \left(  \allowbreak25\alpha\ln^{2}\left(  1-\alpha\right)  -25\ln
^{2}\left(  1-\alpha\right)  -\allowbreak35\alpha+31.32\right) +60\left(  \min\left(  0.92,\beta\right)  -0.63\allowbreak\right) \\
& +I_{(\beta  >0.92)}\left(  4\allowbreak\left(  1-\beta\right)  \ln^{2}\left(
1-\beta\right)  +12\left(  1-\beta\right)  \ln\left(  1-\beta\right)
+47\beta-42.73\right)  
\end{align*}
since $0.18\leq\alpha\leq0.39$, we have $I_{(\alpha<0.63)}=1$. Let us then consider the two sets of value $[0.8,0.92]$ and $(0.92,0.98]$ for which $I_{(\beta   >0.92)}$ takes different values, and the respective functions $\Psi_{1}(\alpha,\beta)$,$\Psi_{2}(\alpha,\beta)$ associated to them: 
\begin{align*}
\Psi_{1}(\alpha,\beta) &  =\allowbreak25\alpha\ln^{2}\left(  1-\alpha\right)
-25\ln^{2}\left(  1-\alpha\right)  -\allowbreak35\alpha+31.32+60\left(
\beta-0.63\allowbreak\right)  
\end{align*}
\begin{align*}
\Psi_{2}(\alpha,\beta) &  =25\alpha\ln^{2}\left(  1-\alpha\right)  -25\ln
^{2}\left(  1-\alpha\right)  -\allowbreak35\alpha+31.32\\
&  +4\allowbreak\left(  1-\beta\right)  \ln^{2}\left(  1-\beta\right)
+12\left(  1-\beta\right)  \ln\left(  1-\beta\right)  +47\beta
-42.73+\allowbreak17.4
\end{align*}
It can be checked that the derivative $\nicefrac{\mathrm{d}\Psi_{1}(\alpha,\beta)/(\beta-\alpha)}{\mathrm{d}\beta
}$ is positive for $0.18\leq\alpha\leq0.39$, hence the maximum of $\Psi_{1}(\alpha,\beta
)/(\beta-\alpha)$ is achieved at $\beta=0.98$. Also, since $\Psi_{1}%
(\alpha,0.98)/(0.98-\alpha)$ decreases as $\alpha$ increases, we have
\[
\sup\frac{1}{\beta-\alpha}\Psi_{1}(\alpha,\beta)=\frac{1}{0.98-0.18}\Psi
_{1}(0.18,0.98)=56.52.
\]
A similar analysis for $\nicefrac{\Psi_{2}(\alpha,\beta)}{(\beta
-\alpha)}$ shows that maximum is achieved for $\alpha=0.39$, $\beta=0.8$. Hence%
\[
\sup\frac{1}{\beta-\alpha}\Psi_{2}(\alpha,\beta)=\frac{1}{0.8-0.39}\Psi
_{2}(0.39,0.8)=59.57.
\]
and, finally, we have $\overline{\mathbb{E}}(h|B)=\max(56.52,59.57)=59.57.$  Figure~\ref{fig:NonMonF1} gives an illustration of the extremizing cumulative distribution for which this upper conditional expectation is reached.

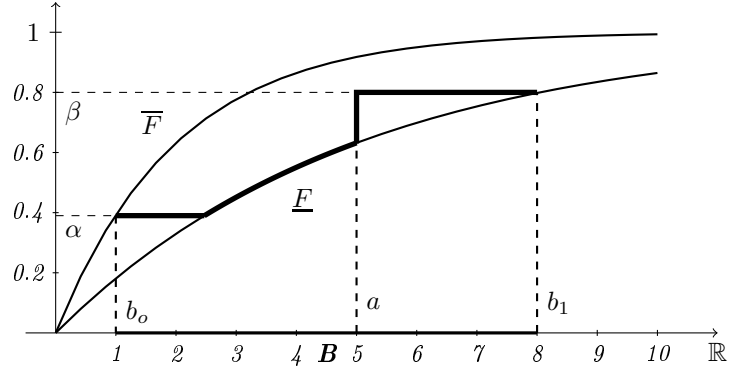
\begin{figure}[ptb]
\centering

\begin{tikzpicture}[scale=0.8]

\draw[->] (-0.5,0) -- (11,0) node[below] {$\mathbb{R}$};
\draw[->] (0,0) -- (0,5.5);
\draw (-0.1,5) node[left] {$1$} -- (0.1,5);
\draw[domain=0:10,thick]  plot[id=exp7] function{5*(1-exp(-0.2*x))} ;
\node at (1.6,3.5) {$\udf$};
\draw[domain=0:10,thick]  plot[id=exp8] function{5*(1-exp(-0.5*x))};
\draw[dashed] (0,0.39*5) node[below right] {$\alpha$} -- (1,0.39*5);
\draw[dashed] (0,0.8*5) node[below right] {$\beta$} -- (8,0.8*5);
\draw[dashed,thick] (1,0.39*5) -- (1,0) node[above right] {$b_o$};
\draw[very thick]  (1,0) -- (8,0) node[midway,below] {\textbf{B}};
\draw[dashed,thick] (8,0)  -- (8,0.8*5) node[very near start,right] {$b_1$};
\draw[dashed,thick] (5,0)  -- (5,0.8*5) node[very near start,right] {$a$};
\foreach \x in {1,...,10} {\draw (\x , 0.05) -- (\x , -0.05) node[below] {\small \x};}
\foreach \y in {0.2,0.4,0.6,0.8} {\draw (0.05 , \y*5) -- (-0.05 , \y*5) node[left] {\small \y};}
\draw[line width=2pt] (1,0.39*5) -- (2.47,0.39*5);
\draw[domain=2.47:5, line width=2pt]  plot[id=exp9] function{5*(1-exp(-0.2*x))} ;
\draw[line width=2pt] (5,0.63*5) -- (5,0.8*5) -- (8,0.8*5);
\node at (4.1,2.2) {$\ldf$};

\end{tikzpicture}
\caption{Optimal
distribution (thick) for computing upper conditional expectation on $B=[1,8]$}%
\label{fig:NonMonF1}%
\end{figure}

Let us now detail the computations for 
\[
\underline{\mathbb{E}}(h|B)=\inf_{\substack{0.18\leq\alpha\leq0.39\\0.8\leq
\beta\leq0.98}}\frac{1}{\beta-\alpha}\Phi(\alpha,\beta),
\]
where%
\begin{align*}
\Phi(\alpha,\beta)  &  =\left(  60-(1-5)^{2}\right)  \left(  0.39-\alpha
\right)  +\int_{1}^{2.85}\left(  60-(x-5)^{2}\right)  0.5e^{-0.5x}%
\mathrm{d}x\\
&  +\left(  60-(8-5)^{2}\right)  \left(  \beta-\allowbreak0.8\right)
+\int_{7.14}^{8}\left(  60-(x-5)^{2}\right)  0.2e^{-0.2x}\mathrm{d}x\\
&  =\allowbreak51\beta-44\alpha-3.54.
\end{align*}
The function $\frac{1}{\beta-\alpha}\Phi(\alpha,\beta)$ increases as $\alpha$
increases by arbitrary $0.8\leq\beta\leq0.98$ and increases as $\beta$
increases. This implies that $
\underline{\mathbb{E}}(h|B)=\nicefrac{1}{(0.8-0.18)}\left(  
51\cdot0.8-44\cdot0.18-3.54\right)  =47.32.$
\end{example}

Note that, in the general case, four functions $\Psi_{i}$ (corresponding to all combinations of values of $I_{(\alpha<\underline{F}^{-1}(a))}$, $I_{(\beta   >\overline{F}^{-1}(a))}$ inside $\{0,1\}^2$) would have to be considered in the computation of $\overline{\mathbb{E}}(h|B)$. Example~\ref{exmp:onemax-cond} well illustrates the fact that when $h$ is non-monotone, analytical solutions can still be found in some cases, but that they tend to become tedious to compute. This will be confirmed in the next section.

\section{Functions with local maxima/minima
\label{sec:maxminunivar}}

Now we consider a general form of the function $h$, i.e., the
function $h\left(  x\right)  $ has alternate local maxima at point
$a_{i}$, $i=1,2,...$ and minima at point $b_{i}$, $i=0,1,2,...$, such that
\begin{equation}
b_{0}<a_{1}<b_{1} \ldots < b_{i} < a_{i} < b_{i+1} < \ldots \label{NonMonF52}%
\end{equation}
Note that, in this case, studying the shape of the extremizing cumulative distribution reaching lower expectation is sufficient, thanks to the duality between lower and upper expectation.

\begin{proposition}
\label{prop:manymaxmin} If local maxima ($a_{i}$) and minima
($b_{i}$) of the function $h$ satisfy condition (\ref{NonMonF52}),
then the extremizing distribution $F$ for computing the lower
unconditional expectation $\underline{\mathbb{E}}(h)$ has discontinuities (vertical jumps) at
points $b_{i}$, $i=1,...$. of the size
\[
\min\left(  \overline{F}\left(  b_{i}\right)  ,\alpha_{i+1}\right)
-\max\left(  \underline{F}\left(  b_{i}\right)  ,\alpha_{i}\right) .
\]

Between points $b_{i-1}$ and $b_{i}$, that is between discontinuities numbered $i-1$ and $i$, the extremizing cumulative probability
distribution function $F$ is of the form:
\[
F\left(  x\right)  =\left\{
\begin{array}
[c]{ll}%
\overline{F}\left(  x\right)  , & x<a^{\prime}\\
\alpha, & a^{\prime}\leq x\leq a^{\prime\prime}\\
\underline{F}\left(  x\right)  , & a^{\prime\prime}<x
\end{array}
\right.  ,
\]
where $\alpha$ is the root of the equation
\[
h\left(  \max\left(  \overline{F}^{-1}\left(  \alpha\right)
,b_{i-1}\right) \right)  =h\left(  \min\left(
\underline{F}^{-1}\left(  \alpha\right) ,b_{i}\right)  \right)
\]
in interval $\left[  \underline{F}\left(  a_{i}\right)
,\overline{F}\left( a_{i}\right)  \right]  $, and $a^{\prime}$,$a^{\prime\prime}$ are such that
\[
a^{\prime}=\max\left(  \overline{F}^{-1}\left(  \alpha\right)  ,b_{i-1}%
\right)  ,\ a^{\prime\prime}=\min\left(  \underline{F}^{-1}\left(
\alpha\right)  ,b_{i}\right)  .
\]

The upper expectation $\overline{\mathbb{E}}(h)$ can be found from
the condition
$\overline{\mathbb{E}}(h)=-\underline{\mathbb{E}}(-h)$.
\end{proposition}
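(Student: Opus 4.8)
The plan is to follow the template of the two proofs of Proposition~\ref{pr:NonMonF1}, the cleaner route being through random sets. One starts from
\[
\underline{\mathbb{E}}(h)=\int_{0}^{1}\inf_{x\in A_{\gamma}}h(x)\,\mathrm{d}\gamma,\qquad A_{\gamma}=[\overline{F}^{-1}(\gamma),\underline{F}^{-1}(\gamma)],
\]
assuming, as in the earlier proofs, that $h,\underline{F},\overline{F}$ are differentiable and that $\underline{F},\overline{F}$ are strictly increasing, so that the inverses are genuine functions. The decisive elementary observation is that, because $h$ obeys \eqref{NonMonF52}, its infimum over any interval $[u,v]$ is attained either at $u$, at $v$, or at one of the local minima $b_{i}$ lying in $[u,v]$; hence
\[
\inf_{x\in A_{\gamma}}h(x)=\min\Bigl(h(\overline{F}^{-1}(\gamma)),\,h(\underline{F}^{-1}(\gamma)),\,\min_{i:\,b_{i}\in A_{\gamma}}h(b_{i})\Bigr).
\]
The work then reduces to deciding, level by level in $\gamma$, which competitor attains this minimum, and to reading off the resulting selection $\gamma\mapsto x(\gamma)$ as a cumulative distribution.

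The next step is to localise the argument on each block $[b_{i-1},b_{i}]$, on which $h$ has the single maximum $a_{i}$ and is therefore unimodal, so that Proposition~\ref{pr:NonMonF1} and the existence argument of Lemma~\ref{lem:equationsol} apply almost verbatim. The only new feature is the truncations $\max(\cdot,b_{i-1})$ and $\min(\cdot,b_{i})$: when $\overline{F}^{-1}(\alpha)<b_{i-1}$ (resp.\ $\underline{F}^{-1}(\alpha)>b_{i}$) the relevant infimum over $A_{\gamma}$ is no longer attained at an endpoint of $A_{\gamma}$ but at the boundary minimum $b_{i-1}$ (resp.\ $b_{i}$), which belongs to a neighbouring block. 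Concretely one introduces
\[
\varphi_{i}(\alpha)=h\bigl(\max(\overline{F}^{-1}(\alpha),b_{i-1})\bigr)-h\bigl(\min(\underline{F}^{-1}(\alpha),b_{i})\bigr),
\]
checks that it is continuous with $\varphi_{i}(\underline{F}(a_{i}))\le 0$ and $\varphi_{i}(\overline{F}(a_{i}))\ge 0$ as in Lemma~\ref{lem:equationsol}, and defines $\alpha_{i}\in[\underline{F}(a_{i}),\overline{F}(a_{i})]$ as a root. For $\gamma$ ranging over the band associated with block $i$ one then shows the infimiser over $A_{\gamma}$ is $\max(\overline{F}^{-1}(\gamma),b_{i-1})$ while $\gamma<\alpha_{i}$, equals $b_{i}$ over a middle sub-band, and is $\min(\underline{F}^{-1}(\gamma),b_{i})$ afterwards --- precisely the unimodal picture of Proposition~\ref{pr:NonMonF1}, glued across the $b_{i}$'s.

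Finally one translates the selection back into a cumulative distribution. On $[b_{i-1},b_{i}]$ the selection follows $\overline{F}^{-1}$ until it reaches $a'=\max(\overline{F}^{-1}(\alpha_{i}),b_{i-1})$, stays at the flat value $\alpha_{i}$ on $[a',a'']$ with $a''=\min(\underline{F}^{-1}(\alpha_{i}),b_{i})$, then follows $\underline{F}^{-1}$; correspondingly $F=\overline{F}$ on $(b_{i-1},a')$, $F\equiv\alpha_{i}$ on $[a',a'']$, $F=\underline{F}$ on $(a'',b_{i})$, which is the stated form. The mass the random set puts on $\{b_{i}\}$ is the Lebesgue measure of the $\gamma$-band for which $b_{i}$ wins the minimum, namely $\min(\overline{F}(b_{i}),\alpha_{i+1})-\max(\underline{F}(b_{i}),\alpha_{i})$, which gives the claimed jump size and agrees with the left/right limits of the piecewise $F$ at $b_{i}$. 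By construction $\underline{F}\le F\le\overline{F}$, so $F\in\setpb$, and since $F$ realises $\inf_{x\in A_{\gamma}}h(x)$ at every $\gamma$ it attains $\underline{\mathbb{E}}(h)$; the statement for $\overline{\mathbb{E}}(h)$ then follows from $\overline{\mathbb{E}}(h)=-\underline{\mathbb{E}}(-h)$, since $-h$ has local maxima at the old $b_{i}$ and minima at the old $a_{i}$. As an alternative, one can replay the linear-programming proof of Proposition~\ref{pr:NonMonF1} block by block: a primal density equal to $\mathrm{d}\overline{F}/\mathrm{d}x$ on $(b_{i-1},a')$, to $0$ on $(a',a'')$, to $\mathrm{d}\underline{F}/\mathrm{d}x$ on $(a'',b_{i})$, plus atoms at the $b_{i}$, together with dual variables obtained by concatenating the single-maximum dual solutions, and strong duality closes the argument.

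The main obstacle is the global bookkeeping that makes this block decomposition consistent. One must verify that the roots can be chosen with $\alpha_{i}\le\alpha_{i+1}$ --- equivalently, that the jump sizes $\min(\overline{F}(b_{i}),\alpha_{i+1})-\max(\underline{F}(b_{i}),\alpha_{i})$ come out nonnegative --- and that the $\gamma$-bands assigned to the successive infimisers tile $[0,1]$ without gaps or overlaps; when the intervals $[\underline{F}(a_{i}),\overline{F}(a_{i})]$ for consecutive maxima are not well separated this requires a careful comparison. One also has to track the several sub-cases according to whether $\overline{F}^{-1}(\alpha_{i})$ and $\underline{F}^{-1}(\alpha_{i})$ fall inside or outside $[b_{i-1},b_{i}]$ --- the same case split that produced the indicator functions in the one-maximum conditional proposition. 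As before, the smoothness hypotheses on $h,\underline{F},\overline{F}$ are what let us avoid the pathologies of flat stretches and non-invertibility, which would otherwise force the use of one-sided inverses throughout.
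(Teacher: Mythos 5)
Your proposal is correct and follows essentially the same route as the paper, which likewise gives a random-set proof (tracking, level by level in $\gamma$, whether the infimum over $A_{\gamma}$ is attained at an endpoint or at an interior local minimum $b_{k}$, with the same four regimes you describe) alongside a block-by-block linear-programming proof with the same three subcases according to the position of $\alpha$ relative to $[\overline{F}(b_{i-1}),\underline{F}(b_{i})]$. The one piece of bookkeeping you defer --- that the glued piecewise $F$ is nondecreasing, equivalently that the jumps at the $b_{i}$ are nonnegative --- is exactly the paper's closing step, settled by comparing the maximal value $\max(\overline{F}(b_{i-1}),\underline{F}(b_{i}))$ of $F$ on one block with the minimal value $\min(\overline{F}(b_{i}),\underline{F}(b_{i+1}))$ on the next.
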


\begin{proof}
[\textbf{Proof using linear programming}]This
proof is based on the investigation of the following local primal
and dual optimization problems for computing the lower expectation
of $h$ in finite interval $\left[
{b_{0},b_{1}}\right)  $ where $h$ has one maximum at point $a_{1}$:%

\begin{center}
\begin{tabular}
[c]{l}\hline $\text{\textbf{Primal problem:}}$\\\hline
\textbf{Min.} $\mathbf{v}=\int_{b_{0}}^{b_{1}}{h\left(  x\right)  f\left(  x\right)  }\mathrm{d}%
{x}$ \\ $\text{subject to}$\\
$f\left(  x\right)  \geq0,$ $F_{0}\geq0,$ $F_{1}\geq0,$\\
$-\int_{b_{0}}^{x}{f\left(  t\right)
\mathrm{d}{t}}-F_{0}\geq-\overline
{F}\left(  x\right)  ,$\\
$\int_{b_{0}}^{x}{f\left(  t\right)
\mathrm{d}{t}}+F_{0}\geq\underline
{F}\left(  x\right)  ,$\\
$-F_{0}\geq-\overline{F}\left(  {b_{0}}\right)
,$$F_{0}\geq\underline
{F}\left(  {b_{0}}\right)  ,$\\
$-F_{1}\geq-\overline{F}\left(  {b_{1}}\right)
,$$F_{1}\geq\underline
{F}\left(  {b_{1}}\right)  ,$\\
$\int_{b_{0}}^{b_{1}}{f\left(  t\right)  \mathrm{d}{t}}+F_{0}-F_{1}%
=0.$\\\hline
\end{tabular}
\begin{tabular}
[c]{l}\hline $\text{\textbf{Dual problem:}}$\\\hline
\textbf{Max.} $\mathbf{w}=-c_{0}\overline{F}\left(  {b_{0}}\right)
+d_{0}\underline{F}\left( {b_{0}}\right)  -c_{1}\overline{F}\left(
{b_{1}}\right) $  \\
$+d_{1}\underline
{F}\left(  {b_{1}}\right) +\int_{b_{0}}^{b_{1}}\left(  {-\overline{F}\left(  x\right) c\left(
x\right)  +\underline{F}\left(  x\right)  d\left(  x\right) }\right)
{\mathrm{d}x}$\\
$\text{subject to}$\\
$e+\int_{x}^{b_{1}}{\left(  {-c\left(  t\right)  +d\left(  t\right)
}\right)
\mathrm{d}{t}\leq}h\left(  x\right)  ,$\\
$e-c_{0}+d_{0}+\int_{b_{0}}^{b_{1}}{\left(  {-c\left(  t\right)
+d\left(
t\right)  }\right)  \mathrm{d}{t}\leq}0,$\\
$-e-c_{1}+d_{1}\leq0,$\\
$c\left(  x\right)  \geq0,$$c_{0}\geq0,$$c_{1}\geq0,$\\
$d\left(  x\right)
\geq0,$$d_{0}\geq0,$$d_{1}\geq0,$$e\in\mathbb{R}$\\\hline
\end{tabular}
\end{center}

The optimal solutions of the above problems correspond to the extremizing distribution for values $x \in\left[  {b_{0},b_{1}}\right)$. $F_0:=F(b_0)$ and $F_1:=F(b_1)$ respectively stand for the values of the extremizing $F$ in $b_0$ and $b_1$. The proof then follows in two main steps:
\begin{enumerate}
\item Find optimal solution (that is, propose a feasible solution which coincide for both the primal and dual problem) for the above primal and dual problems, and consequently the values of the extremizing $F$ between any two local minima $[b_i,b_{i+1}]$
\item Show that the combination of these piece-wise extremizing $F$ correspond to a cumulative distribution.
\end{enumerate}

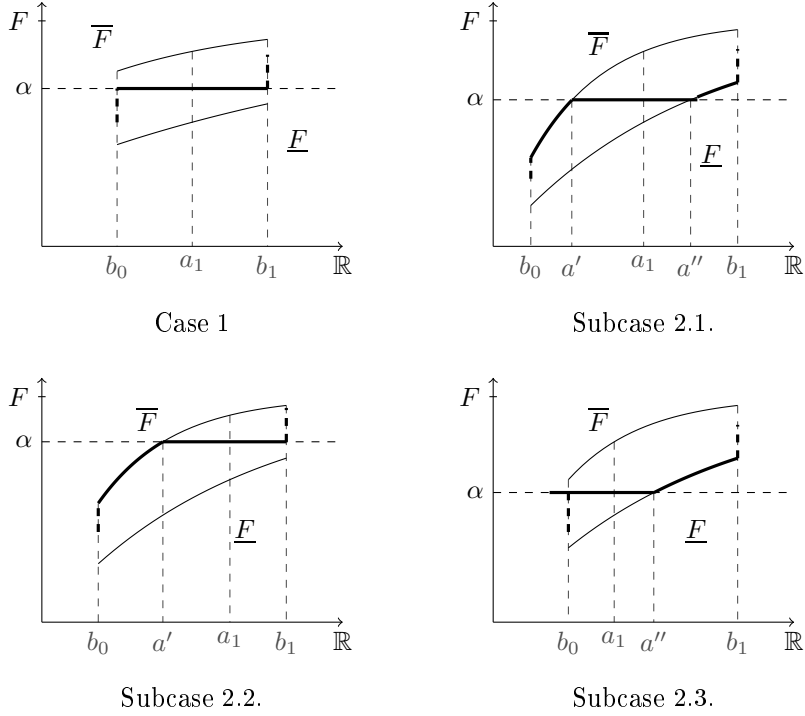
\begin{figure}
\begin{tikzpicture}
\draw[->] (2,0) -- (6,0) node[below] {$\mathbb{R}$};
\draw[->] (2,0) -- (2,3.25);
\draw (1.95,3) node[left] {$F$} -- (2.05,3);
\draw[domain=3:5]  plot[id=exp10] function{3*(1-exp(-0.2*x))} ;
\node at (2.8,2.8) {$\udf$};
\draw[domain=3:5]  plot[id=exp11] function{3*(1-exp(-0.5*x))};
\draw[dashed] (2,0.7*3) node[left] {$\alpha$} -- (6,0.7*3);
\draw[dashed,opacity=0.7] (3,0.776*3) -- (3,0) node[below] {$b_0$};
\draw[dashed,opacity=0.7] (4,0.864*3) -- (4,0) node[below] {$a_1$};
\draw[dashed,opacity=0.7] (5,0.918*3) -- (5,0) node[below] {$b_1$};
\draw[dashed,very thick] (3,0.55*3) -- (3,0.7*3);
\draw[very thick] (3,0.7*3) -- (5,0.7*3);
\draw[dashed,very thick] (5,0.7*3) -- (5,0.85*3);
\node at (5.4,1.4) {$\ldf$};
\node at (4,-1) {Case 1};

\begin{scope}[yshift=-5cm,xshift=2cm,scale=0.5]
\draw[->] (0,0) -- (8,0) node[below] {$\mathbb{R}$};
\draw[->] (0,0) -- (0,6.5);
\draw (-0.1,6) node[left] {$F$} -- (0.1,6);
\draw[domain=1.5:6.5]  plot[id=exp12] function{6*(1-exp(-0.2*x))} ;
\node at (2.8,5.4) {$\udf$};
\draw[domain=1.5:6.5]  plot[id=exp13] function{6*(1-exp(-0.5*x))};
\draw[dashed] (0,0.8*6) node[left] {$\alpha$} -- (8,0.8*6);
\draw[dashed,opacity=0.7] (1.5,0.527*6) -- (1.5,0) node[below] {$b_0$};
\draw[dashed,opacity=0.7] (5,0.918*6) -- (5,0) node[below] {$a_1$};
\draw[dashed,opacity=0.7] (3.218,0.8*6) -- (3.218,0) node[below] {$a'$};
\draw[dashed,opacity=0.7] (6.5,0.961*6) -- (6.5,0) node[below] {$b_1$};
\draw[dashed,very thick] (1.5,0.4*6) -- (1.5,0.527*6);
\draw[very thick,domain=1.5:3.218]  plot[id=exp14] function{6*(1-exp(-0.5*x))} ;
\draw[very thick] (3.218,0.8*6) -- (6.5,0.8*6);
\draw[dashed,very thick] (6.5,0.8*6) -- (6.5,0.95*6);
\node at (5.4,2.4) {$\ldf$};
\node at (4,-2) {Subcase 2.2.};

\end{scope}

\begin{scope}[yshift=-5cm,xshift=8cm,scale=0.5]
\draw[->] (0,0) -- (8,0) node[below] {$\mathbb{R}$};
\draw[->] (0,0) -- (0,6.5);
\draw (-0.1,6) node[left] {$F$} -- (0.1,6);
\draw[domain=2:6.5]  plot[id=exp15] function{6*(1-exp(-0.2*x))} ;
\node at (2.8,5.4) {$\udf$};
\draw[domain=2:6.5]  plot[id=exp16] function{6*(1-exp(-0.5*x))};
\draw[dashed] (0,0.575*6) node[left] {$\alpha$} -- (8,0.575*6);
\draw[dashed,opacity=0.7] (2,0.632*6) -- (2,0) node[below] {$b_0$};
\draw[dashed,opacity=0.7] (4.27,0.575*6) -- (4.27,0) node[below] {$a''$};
\draw[dashed,opacity=0.7] (3.218,0.8*6) -- (3.218,0) node[below] {$a_1$};
\draw[dashed,opacity=0.7] (6.5,0.961*6) -- (6.5,0) node[below] {$b_1$};
\draw[dashed,very thick] (2,0.4*6) -- (2,0.575*6);
\draw[very thick,domain=4.27:6.5]  plot[id=exp17] function{6*(1-exp(-0.2*x))} ;
\draw[very thick] (1.5,0.575*6) -- (4.27,0.575*6);
\draw[dashed,very thick] (6.5,0.727*6) -- (6.5,0.875*6);
\node at (5.4,2.4) {$\ldf$};
\node at (4,-2) {Subcase 2.3.};

\end{scope}

\begin{scope}[xshift=8cm,scale=0.5]
\draw[->] (0,0) -- (8,0) node[below] {$\mathbb{R}$};
\draw[->] (0,0) -- (0,6.5);
\draw (-0.1,6) node[left] {$F$} -- (0.1,6);
\draw[domain=1:6.5]  plot[id=exp18] function{6*(1-exp(-0.2*x))} ;
\node at (2.8,5.4) {$\udf$};
\draw[domain=1:6.5]  plot[id=exp19] function{6*(1-exp(-0.5*x))};
\draw[dashed] (0,0.65*6) node[left] {$\alpha$} -- (8,0.65*6);
\draw[dashed,opacity=0.7] (1,0.393*6) -- (1,0) node[below] {$b_0$};
\draw[dashed,opacity=0.7] (5.249,0.65*6) -- (5.249,0) node[below] {$a''$};
\draw[dashed,opacity=0.7] (2.09,0.65*6) -- (2.09,0) node[below] {$a'$};
\draw[dashed,opacity=0.7] (4,0.864*6) -- (4,0) node[below] {$a_1$};
\draw[dashed,opacity=0.7] (6.5,0.961*6) -- (6.5,0) node[below] {$b_1$};
\draw[dashed,very thick] (1,0.3*6) -- (1,0.393*6);
\draw[very thick,domain=5.429:6.5]  plot[id=exp20] function{6*(1-exp(-0.2*x))} ;
\draw[domain=1:2.09,very thick]  plot[id=exp21] function{6*(1-exp(-0.5*x))};
\draw[very thick] (2.09,0.65*6) -- (5.429,0.65*6);
\draw[dashed,very thick] (6.5,0.727*6) -- (6.5,0.875*6);
\node at (5.8,2.4) {$\ldf$};
\node at (4,-2) {Subcase 2.1.};

\end{scope}

\end{tikzpicture}
\caption{Four cases of piece-wise extremizing $F$}
\label{fig:gencase}
\end{figure}


\textbf{Step (1) of the proof} To find optimal solution between $x \in\left[  {b_{0},b_{1}}\right] $, we will consider every possible cases. First, we can differentiate between two main cases, depending on the inequality relation between $\overline{F}\left(  {b_{0}}\right) $ and $\underline{F}\left(
{b_{1}}\right)  $.

\textbf{Case 1.} $\overline{F}\left(  {b_{0}}\right)  >\underline{F}\left(
{b_{1}}\right)  $. The optimal solution in this case is of
the form: it corresponds to the solution $f\left(  x\right)  =0$, $F\left(  x\right)  =F_{0}=F_{1}=\alpha$,
where $\alpha$ is an arbitrary number satisfying the condition $\underline
{F}\left(  {b_{1}}\right)  <\alpha<\overline{F}\left(  {b_{0}}\right)  $ for
the primal problem and to the solution $c\left(  x\right)  =d\left(  x\right)
=0$, $c_{0}=d_{0}=c_{1}=d_{1}=e=0$ for the dual problem. See Fig.~\ref{fig:gencase} for an illustration%

\textbf{Case 2. }$\overline{F}\left(  {b_{0}}\right)  \leq\underline{F}\left(
{b_{1}}\right)  $. This case is similar to the one considered in Section~\ref{sec:maxunivar}, since between $\left[  b_{0},b_{1}\right)$, $h$ has a maximum for $x={a_{1}}$ and is increasing (resp. decreasing) in $[b_0,a_1]$ (resp. $[a_1,b_1)$). We will therefore proceed in the same way as in the proof of Proposition~\ref{pr:NonMonF1} to find the optimal solution. First recall (Lemma~\ref{lem:equationsol}) that there is a value $\alpha$ which is a
root of the function
\[
\varphi\left(  \alpha\right)  =h\left(  {\max\left(  {\overline{F}^{-1}\left(
\alpha\right)  ,b_{0}}\right)  }\right)  -h\left(  {\min\left(  {\underline
{F}^{-1}\left(  \alpha\right)  ,b_{1}}\right)  }\right)
\]
with $\alpha \in \left[  {\underline{F}\left(  {a_{1}}\right)  ,\overline
{F}\left(  {a_{1}}\right)  }\right]  $. Three subcases can now occur, depending whether $\alpha$ is inside $[\overline{F}\left(  {b_{0}}\right),\underline{F}\left(
{b_{1}}\right)]$ or is higher/lower than any value in this interval. We now give details about each of these subcases, the reasoning being similar to the one in the proof of Proposition~\ref{pr:NonMonF1}. All subcases and associated extremizing distribution are illustrated in Fig.~\ref{fig:gencase}

\textbf{Subcase 2.1.} $\overline{F}\left(  {b_{0}}\right)  \leq\alpha
\leq\underline{F}\left(  {b_{1}}\right)  $ ($\alpha \in [\overline{F}\left(  {b_{0}}\right),\underline{F}\left(
{b_{1}}\right)]$). Let us denote $a^{\prime
}=\overline{F}^{-1}\left(  \alpha\right)  $, $a^{\prime\prime}=\underline
{F}^{-1}\left(  \alpha\right)  $. Then the optimal solution is of the form:
\[
f\left(  x\right)  =\left\{
\begin{array}
[c]{cc}%
{\mathrm{d}\overline{F}(x)/\mathrm{d}x,} & {b_{0}<x<a^{\prime}}\\
{0,} & {a^{\prime}\leqslant x\leqslant a^{\prime\prime}}\\
{\mathrm{d}\underline{F}\left(  x\right)  /\mathrm{d}}x{,} & {a^{\prime\prime
}<x<b_{1}}%
\end{array}
\right.  ,
\]%
\[
F_{0}=\overline{F}\left(  {b_{0}}\right)  ,\ F_{1}=\underline{F}\left(
{b_{1}}\right)  .
\]
This implies that%
\[
F\left(  x\right)  =\int_{b_{0}}^{x}{f\left(  t\right)  \mathrm{d}{t}}%
+F_{0}=\left\{
\begin{array}
[c]{cc}%
{\overline{F}\left(  x\right)  ,} & {b_{0}<x<a^{\prime}}\\
\alpha{,} & {a^{\prime}\leqslant x\leqslant a^{\prime\prime}}\\
{\underline{F}\left(  x\right)  ,} & {a^{\prime\prime}<x<b_{1}}%
\end{array}
\right.  .
\]
Let us now give the corresponding solution to the dual problem, and show that they are equal. According to relations between primal/dual problem, we have that if $a^{\prime
}<x<b_{1}$, then $c\left(  x\right)  =0$, and if $b_{0}<x<a^{\prime\prime}$,
then $d\left(  x\right)  =0$. It is obvious that $d_{0}=c_{1}=0$. Consider the constraint
\[
e+\int_{x}^{b_{1}}{\left(  {-c\left(  t\right)  +d\left(  t\right)  }\right)
\mathrm{d}{t}}\leq h\left(  x\right)
\]
for different intervals of $x$.

Let $a^{\prime\prime}<x<b_{1}$. Then there holds%
\[
e+\int_{x}^{b_{1}}{d\left(  t\right)  \mathrm{d}{t}}=h\left(  x\right)  .
\]
Hence $d\left(  x\right)  =-h^{\prime}\left(  x\right)  $ and $e=h\left(
{b_{1}}\right)  $.

Let $a^{\prime}\leq x\leq a^{\prime\prime}$. Then the following inequality
\[
e+\int_{a^{\prime\prime}}^{b_{1}}{d\left(  t\right)  \mathrm{d}{t}}\leq
h\left(  x\right)
\]
or $h\left(  {a^{\prime\prime}}\right)  \leq h\left(  x\right)  $ has to be
valid. Indeed, the inequality is valid due to the condition $h\left(
{a^{\prime}}\right)  =h\left(  {a^{\prime\prime}}\right)  $.

Let $b_{0}<x<a^{\prime}$. Then
\[
e-\int_{x}^{a^{\prime}}{c\left(  t\right)  \mathrm{d}{t}}+\int_{a^{\prime
\prime}}^{b_{1}}{d\left(  t\right)  \mathrm{d}{t}}=h\left(  x\right)
\]
or
\[
-\int_{x}^{a^{\prime}}{c\left(  t\right)  \mathrm{d}{t}}+h\left(
{a^{\prime\prime}}\right)  =h\left(  x\right)  .
\]
Hence $c\left(  x\right)  =h^{\prime}\left(  x\right)  $. The equality
\[
e-c_{0}+d_{0}+\int_{b_{0}}^{b_{1}}{\left(  {-c\left(  t\right)  +d\left(
t\right)  }\right)  \mathrm{d}{t}}=0
\]
shows that
\[
h\left(  {b_{1}}\right)  -c_{0}-h\left(  {a^{\prime}}\right)  +h\left(
{b_{0}}\right)  -h\left(  {b_{1}}\right)  +h\left(  {a^{\prime\prime}}\right)
=0
\]
and $c_{0}=h\left(  {b_{0}}\right)  $. It follows from the equality
$-e-c_{1}+d_{1}=0$ that there holds $d_{1}=e=h\left(  {b_{1}}\right)  $. In
sum, we have%
\[
c\left(  x\right)  =\left\{
\begin{array}
[c]{cc}%
{h^{\prime}\left(  x\right)  ,} & {b_{0}<x<a^{\prime}}\\
{0,} & {a^{\prime}\leqslant x\leqslant b_{1}}%
\end{array}
{\,}\right.  ,
\]%
\[
d\left(  x\right)  =\left\{
\begin{array}
[c]{cc}%
{0,} & {b_{0}<x<a^{\prime\prime}}\\
{-h^{\prime}\left(  x\right)  ,} & {a^{\prime\prime}\leqslant x\leqslant
b_{1}}%
\end{array}
\right.  ,
\]%
\[
c_{0}=h\left(  {b_{0}}\right)  ,\ d_{0}=c_{1}=0,\ d_{1}=e=h\left(  {b_{1}%
}\right)  .
\]

Let us now show that the two obtained solution coincide:
\[
z_{\min}=\int_{b_{0}}^{a^{\prime}}{h\left(  x\right)  \mathrm{d}\overline
{F}\left(  x\right)  }+\int_{a^{\prime\prime}}^{b_{1}}{h\left(  x\right)
\mathrm{d}\underline{F}\left(  x\right)  }%
\]%
\[
w_{\max}=-\overline{F}\left(  {b_{0}}\right)  h\left(  {b_{0}}\right)
+\underline{F}\left(  {b_{1}}\right)  h\left(  {b_{1}}\right)  -\int_{b_{0}%
}^{a^{\prime}}{\overline{F}\left(  x\right)  h^{\prime}\left(  x\right)
}\mathrm{d}{x}-\int_{a^{\prime\prime}}^{b_{1}}{\underline{F}\left(  x\right)
h^{\prime}\left(  x\right)  }\mathrm{d}{x}%
\]
or%
\begin{align*}
w_{\max}  &  =-\overline{F}\left(  {b_{0}}\right)  h\left(  {b_{0}}\right)
+\underline{F}\left(  {b_{1}}\right)  h\left(  {b_{1}}\right) \\
&  +\int_{b_{0}}^{a^{\prime}}{h\left(  x\right)  }\mathrm{d}{\overline
{F}\left(  x\right)  }-\overline{F}\left(  {a^{\prime}}\right)  h\left(
{a^{\prime}}\right)  +\overline{F}\left(  {b_{0}}\right)  h\left(  {b_{0}%
}\right) \\
&  +\int_{a^{\prime\prime}}^{b_{1}}{h\left(  x\right)  \mathrm{d}\underline
{F}\left(  x\right)  -\underline{F}\left(  {b_{1}}\right)  h\left(  {b_{1}%
}\right)  }+\underline{F}\left(  {a^{\prime\prime}}\right)  h\left(
{a^{\prime\prime}}\right) \\
&  =z_{\min}.
\end{align*}
Hence the proposed solution is the optimal one.

\textbf{Subcase 2.2.} $\alpha>\underline{F}\left(  {b_{1}}\right)  $ ($[\overline{F}\left(  {b_{0}}\right),\underline{F}\left(
{b_{1}}\right)] \leq \alpha$). Denote
$a^{\prime}=\overline{F}^{-1}\left( \alpha \right)  $. Then the optimal solution to the initial problem is:
\[
f\left(  x\right)  =\left\{
\begin{array}
[c]{cc}%
{\mathrm{d}\overline{F}\left(  x\right)  /\mathrm{d}x,} & {b_{0}<x<a^{\prime}%
}\\
{0,} & {a^{\prime}\leqslant x\leqslant b_{1}}%
\end{array}
\right.  {\,,~}F_{0}=\overline{F}\left(  {b_{0}}\right)  ,~F_{1}=\alpha  ,
\]%
\[
F\left(  x\right)  =\int_{b_{0}}^{x}{f\left(  t\right)  \mathrm{d}{t}}%
+F_{0}=\left\{
\begin{array}
[c]{cc}%
{\overline{F}\left(  x\right)  ,} & {b_{0}<x<a^{\prime}\,}\\
{\alpha  ,} & {a^{\prime}\leqslant x\leqslant
b_{1}}%
\end{array}
\right.  .
\]
The corresponding solution for the dual problem is such that if $a^{\prime
}<x<b_{1}$, then $c\left(  x\right)  =0$, and if $b_{0}<x<b_{1}$, then
$d\left(  x\right)  =0$, hence we have $d_{0}=c_{1}=0$. Again, consider 
the constraint
\[
e+\int_{x}^{b_{1}}{\left(  {-c\left(  t\right)  +d\left(  t\right)  }\right)
\mathrm{d}{t}}\leq h\left(  x\right)
\]
for different intervals. Let $a^{\prime}<x<b_{1}$. Then the condition $e\leq
h\left(  x\right)  $ must be valid. Let $b_{0}<x<a^{\prime}$. Then there holds%
\[
e-\int_{x}^{a^{\prime}}{c\left(  t\right)  \mathrm{d}{t}}=h\left(  x\right)
.
\]
Consequently, there hold the equalities $c\left(  x\right)  =h^{\prime}\left(
x\right)  $ and $e=h\left(  {a^{\prime}}\right)  $. Hence the inequality
$e=h\left(  {a^{\prime}}\right)  \leq h\left(  x\right)  $ is valid for the
interval $a^{\prime}<x<b_{1}$. The equality
\[
e-c_{0}+d_{0}+\int_{b_{0}}^{b_{1}}{\left(  {-c\left(  t\right)  +d\left(
t\right)  }\right)  \mathrm{d}{t}}=0
\]
shows that $h\left(  {a^{\prime}}\right)  -c_{0}-h\left(  {a^{\prime}}\right)
+h\left(  {b_{0}}\right)  =0$, and, therefore, $c_{0}=h\left(  {b_{0}}\right)
$. It follows from the equality $-e-c_{1}+d_{1}=0$ that there holds
$d_{1}=e=h\left(  {a^{\prime}}\right)  $. In sum, we get
\[
c\left(  x\right)  =\left\{
\begin{array}
[c]{cc}%
{h^{\prime}\left(  x\right)  ,} & {b_{0}<x<a^{\prime}}\\
{0,} & {\,a^{\prime}\leqslant x\leqslant b_{1}}%
\end{array}
\right.  ,
\]%
\[
d\left(  x\right)  =0,c_{0}=h\left(  {b_{0}}\right)  ,\ d_{0}=c_{1}%
=0,\ d_{1}=e=h\left(  {a^{\prime}}\right)  .
\]
The obtained solutions
for the primal and dual problems are such that:
\[
z_{\min}=\int_{b_{0}}^{a^{\prime}}{h\left(  x\right)  \mathrm{d}\overline
{F}\left(  x\right)  ,}%
\]%
\[
w_{\max}=-\overline{F}\left(  {b_{0}}\right)  h\left(  {b_{0}}\right)
+\udf(a') h\left(  {a^{\prime}}\right)
-\int_{b_{0}}^{a^{\prime}}{\overline{F}\left(  x\right)  h^{\prime}\left(
x\right)  }\mathrm{d}{x}%
\]
or
\begin{align*}
w_{\max}  &  =-\overline{F}\left(  {b_{0}}\right)  h\left(  {b_{0}}\right)
+\udf(a') h\left(  {a^{\prime}}\right) \\
&  +\int_{b_{0}}^{a^{\prime}}{h\left(  x\right)  }\mathrm{d}{\overline
{F}\left(  x\right)  }-\overline{F}\left(  {a^{\prime}}\right)  h\left(
{a^{\prime}}\right)  +\overline{F}\left(  {b_{0}}\right)  h\left(  {b_{0}%
}\right) \\
&  =z_{\min}.
\end{align*}
Consequently, this is the optimal solution.

\textbf{Subcase 2.3. }$\alpha<\overline{F}\left(  {b_{0}}\right)  $ ($ \alpha \leq [\overline{F}\left(  {b_{0}}\right),\underline{F}\left(
{b_{1}}\right)]$). Denote
$a^{\prime\prime}=\underline{F}^{-1}\left(  {\overline{F}\left(  {b_{0}%
}\right)  }\right)  $. Then the optimal solution to the primal problem is
\[
f\left(  x\right)  =\left\{
\begin{array}
[c]{cc}%
{0,} & {b_{0}\leqslant x\leqslant a^{\prime\prime}}\\
{\mathrm{d}\underline{F}\left(  x\right)  /\mathrm{d}x,} & {a^{\prime\prime
}<x<b_{1}}%
\end{array}
\right.  ,\ F_{0}=\alpha  ,\ F_{1}=\underline
{F}\left(  {b_{1}}\right)  .
\]%
\[
F\left(  x\right)  =\left\{
\begin{array}
[c]{cc}%
{\alpha  ,} & {b_{0}\leqslant x\leqslant
a^{\prime\prime}}\\
{\underline{F}\left(  x\right)  ,} & {a^{\prime\prime}<x<b_{1}}%
\end{array}
\right.  .
\]
and the proof is similar to the one of above cases. Optimal shape of $F$ for any interval $[b_i,b_{i+1}]$ can be obtained by replacing $b_0$ and $b_1$ by respectively $b_i$ and $b_{i+1}$ in the above proofs, as they are general (as pictured on Fig.~\ref{fig:gencase}). All is left to prove is that  the concatenated $F$ obtained by the piece-wise extremizing solutions is increasing (i.e., that $F_i$ for $[b_{i-1},b_i]$ is lower or equal than  $F_i$ for $[b_{i},b_{i+1}]$).

\textbf{Step (2) of the proof}  Now we show that the joint extremizing distribution function is increasing.
Without loss of generality we consider only two intervals $\left[
{b_{0},b_{1}}\right]  $ and $\left[  {b_{1},b_{2}}\right]  $. The maximal
value of the function $F\left(  x\right)  $ in the interval $\left[
{b_{0},b_{1}}\right]  $ is $\max\left(  {\overline{F}\left(  {b_{0}}\right)
,\underline{F}\left(  {b_{1}}\right)  }\right)  $ for all the cases. The
minimal value of the function $F\left(  x\right)  $ in the interval $\left[
{b_{1},b_{2}}\right]  $ is $\min\left(  {\overline{F}\left(  {b_{1}}\right)
,\underline{F}\left(  {b_{2}}\right)  }\right)  $ for all the cases. 

If $\underline{F}\left(  {b_{2}}\right)  \geq\overline{F}\left(  {b_{0}%
}\right)  $, then
\[
\min\left(  {\overline{F}\left(  {b_{1}}\right)  ,\underline{F}\left(  {b_{2}%
}\right)  }\right)  \geq\max\left(  {\overline{F}\left(  {b_{0}}\right)
,\underline{F}\left(  {b_{1}}\right)  }\right)  .
\]
This means that the function is increasing. 

If $\underline{F}\left(  {b_{2}}\right)  <\overline{F}\left(  {b_{0}}\right)
$, then $\underline{F}\left(  {b_{1}}\right)  <\overline{F}\left(  {b_{0}%
}\right)  $ and we can take $F\left(  x\right)  =\underline{F}\left(  {b_{1}%
}\right)  $ for the left interval. On the other hand, $\underline{F}\left(
{b_{2}}\right)  <\overline{F}\left(  {b_{1}}\right)  $ and we can take
$F\left(  x\right)  =\overline{F}\left(  {b_{1}}\right)  $ for the left
interval. It follows from the condition $\underline{F}\left(  {b_{1}}\right)
<\overline{F}\left(  {b_{1}}\right)  $ that the function $F\left(  x\right)  $
is increasing in two neighbour intervals.

Figure~\ref{fig:generalcase-illu} gives an example of a general extremizing distribution.
\end{proof}

\begin{proof}
[\textbf{Proof using random sets}]For convenience, we
will consider that $h$ begins with a local minimum and ends with a
local maximum $a_{n}$. Formulas when $h$ begins (resp. ends) with a local
maximum (resp. minimum) are similar. Lower/upper
expectations can be computed as follows:%
\begin{align*}
\underline{\mathbb{E}}(h) &  =\int\limits_{0}^{\underline{F}(b_{n}%
)}\min_{b_{i}\in
A_{\gamma}}(h(a_{\ast\gamma}),h(b_{i}),h(a_{\gamma
}^{\ast}))d\gamma+\int\limits_{\underline{F}(b_{n})}^{1}%
h(a_{\ast\gamma})d\gamma,\\
\overline{\mathbb{E}}(h) &  =\int\limits_{0}^{\underline{F}(a_{1}%
)}h(a_{\gamma}^{\ast})d\gamma+\int\limits_{\underline
{F}(a_{1})}^{\overline{F}(a_{n})}\max_{a_{i}\in A_{\gamma}%
}(h(a_{\ast\gamma}),h(a_{i}),h(a_{\gamma}^{\ast}))d\gamma.
\end{align*}
We concentrate on the formula giving the lower expectation  (details
for upper one are similar). The most interesting part is the first
integral. We consider a particular level $\gamma$. Let $B=\{b_{i},\ldots,b_{j}\}\quad(i\leq j)$ be the set of
local minima included in the set $A_{\gamma}$ ($B$ can be
empty). $b_{i-1}$ and $b_{j+1}$ are the closest local minima outside
$A_{\gamma}$. We then consider the minimal $\Delta\gamma:= \gamma + \delta\gamma$ such that $\min_{b_{i}\in
A_{\gamma}}(h(a_{\ast\gamma}),h(b_{i}),h(a_{\gamma
}^{\ast})) \neq \min_{b_{i}\in
A_{\Delta\gamma}}(h(a_{\ast,\Delta\gamma}),h(b_{i}),h(a_{\Delta\gamma}^{\ast}))$ with $\min_{x \in A_{\Delta\gamma}}h(x) \neq h(a_{\ast,\Delta\gamma})$ if $\min_{x \in A_{\gamma}}h(x) = h(a_{\ast,\gamma})$ and $\min_{x \in A_{\Delta\gamma}}h(x) \neq h(a^{\ast}_{\Delta\gamma})$ if $\min_{x \in A_{\gamma}}h(x) = h(a^{\ast}_{\gamma})$. As in LP proof, four different cases can occur:

\textbf{Case A:} we have $$\min_{b_{i}\in
A_{\gamma}}(h(a_{\ast\gamma}),h(b_{i}),h(a_{\gamma
}^{\ast}))=h(b_k)$$ and $$\min_{b_{i}\in
A_{\Delta\gamma}}(h(a_{\ast,\Delta\gamma}),h(b_{i}),h(a_{\Delta\gamma}^{\ast}))=h(b_{k'}),$$ with $k\neq k'$ and where $h(b_k)$ and $h(b_{k'})$ are respectively the lowest local minima of $h(x)$ for $x \in A_{\gamma}$ and $x \in A_{\Delta\gamma}$. That is, probability mass is concentrated on $b_k$ from $\gamma$ to $\Delta\gamma$, and concentrates on $b_{k'}$ for values $\gamma' \geq \Delta\gamma$. This correspond to Case 1. of Fig.~\ref{fig:gencase} and of the previous proof. In Fig.~\ref{fig:generalcase-illu}, it corresponds to the extremizing distribution between $b_2$ and $b_3$.

\textbf{Case B:} we have $$\min_{b_{i}\in
A_{\gamma}}(h(a_{\ast\gamma}),h(b_{i}),h(a_{\gamma
}^{\ast}))=h(a_{\ast\gamma})$$ and $$\min_{b_{i}\in
A_{\Delta\gamma}}(h(a_{\ast,\Delta\gamma}),h(b_{i}),h(a_{\Delta\gamma}^{\ast}))=h(a_{\Delta\gamma}^{\ast}).$$
This can happen when any local minimum inside $A_{\gamma}$,$A_{\Delta\gamma}$ is higher than local minima just outside it. In this case, it can happen that minimal values stand at the bounds of intervals $A_{\gamma'}$ for any $\gamma \leq \gamma' \leq \Delta\gamma$. This corresponds to Case 2.1. of Fig.~\ref{fig:gencase} and of the previous proof. In Fig.~\ref{fig:generalcase-illu}, it corresponds to the extremizing distribution between $b_4$ and $b_5$.

\textbf{Case C:} we have $$\min_{b_{i}\in
A_{\gamma}}(h(a_{\ast\gamma}),h(b_{i}),h(a_{\gamma
}^{\ast}))=h(b_k)$$ and $$\min_{b_{i}\in
A_{\Delta\gamma}}(h(a_{\ast,\Delta\gamma}),h(b_{i}),h(a_{\Delta\gamma}^{\ast}))=h(a_{\Delta\gamma}^{\ast}).$$
With $h(b_k)$ the lowest local minima for $b_k \in A_{\gamma}$. The minimum shift from the left bound of $A_{\gamma}$ (coinciding with $\udf$) to $b_k$. This corresponds to Case 2.2. of Fig.~\ref{fig:gencase} and of the previous proof. In Fig.~\ref{fig:generalcase-illu}, it corresponds to the extremizing distribution between $b_1$ and $b_2$.

\textbf{Case D:} we have $$\min_{b_{i}\in
A_{\gamma}}(h(a_{\ast\gamma}),h(b_{i}),h(a_{\gamma
}^{\ast}))=ha_{\ast\gamma})$$ and $$\min_{b_{i}\in
A_{\Delta\gamma}}(h(a_{\ast,\Delta\gamma}),h(b_{i}),h(a_{\Delta\gamma}^{\ast}))=h(b_{k'}).$$
With $h(b_{k'})$ the lowest local minima for $b_{k'} \in A_{\Delta\gamma}$. Situation is similar to the previous case, and corresponds to Case 2.3. of Fig.~\ref{fig:gencase} and of the previous proof. In Fig.~\ref{fig:generalcase-illu}, it corresponds to the extremizing distribution between $b_3$ and $b_4$.

When $\min_{b_{i}\in
A_{\gamma}}(h(a_{\ast\gamma}),h(b_{i}),h(a_{\gamma
}^{\ast}))=\min_{b_{i}\in
A_{\Delta\gamma}}(h(a_{\ast\gamma}),h(b_{i}),h(a_{\gamma
}^{\ast})) = h(b_k)$ with $b_k \in A_{\gamma} \cap A_{\Delta\gamma}$, probability mass stay concentrated on $b_k$, and this corresponds to a discontinuity mentioned in Proposition~\ref{prop:manymaxmin}. By letting $\gamma$ evolve from $0$ to $1$, we get the extremizing cumulative distribution of Proposition~\ref{prop:manymaxmin}.
\end{proof}

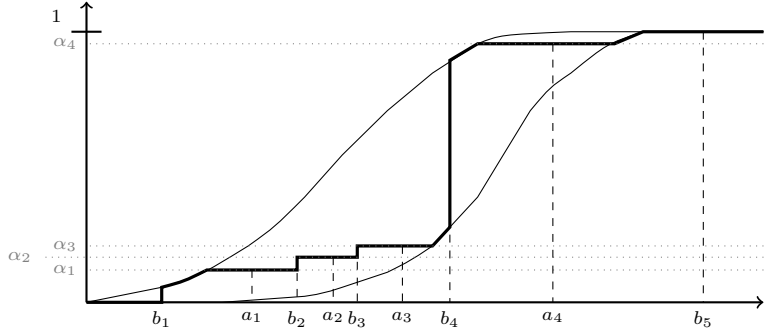
\begin{figure}
\begin{tikzpicture}[scale=2]

\draw[ thick, ->] (0,0) -- (45mm,0mm) ; \draw[thick]
(-1mm,18mm) node[above left] {$\scriptstyle 1$} -- (1mm,18mm) ;
\draw[thick, ->] (0,0) -- (0mm,20mm) ;

 \draw (0,0) -- (5mm,1mm) .. controls (7.5mm,1.75mm) ..
(10.75mm,3.75mm) .. controls (12.5mm,5mm) .. (14.5mm,7mm) --
(17mm,9.8mm) -- (20mm,12.75mm) -- (23mm,15.25mm) -- (26mm,17.2mm) ..
controls (27.5mm,17.8mm) .. (32.2mm,18mm) -- (37mm,18mm) --
(45mm,18mm) ;

\draw (0,0) -- (9mm,0mm) -- (14.5mm,0.4mm) .. controls
(15.75mm,0.6mm) .. (17mm,1mm) -- (20mm,2mm) .. controls
(21.5mm,2.75mm) .. (23mm,3.75mm) -- (26mm,7mm) -- (27.7mm,9.8mm) --
(29.5mm,12.75mm) .. controls (30.8mm,14.3mm) .. (32.2mm,15.25mm) ..
controls (34.6mm,17.2mm) .. (37mm,18mm) -- (45mm,18mm) ;

\draw[very thick] (0,0) -- (5mm,0mm) -- (5mm,1mm) .. controls
(6.5mm,1.4mm) .. (8mm,2.15mm) -- (14mm,2.15mm) -- (14mm,3mm) --
(18mm,3mm) -- (18mm,3.75mm) -- (23mm,3.75mm) -- (24.154mm,5mm) --
(24.154mm,16.1mm) -- (26mm,17.2mm) -- (35.1mm,17.2mm) -- (37mm,18mm)
-- (45mm,18mm) ;

\draw[dotted,thin,gray] (45mm,2.15mm) -- (0mm,2.15mm) node[left]
{$\scriptstyle \alpha_1$} ; \draw[dotted,thin,gray] (45mm,3mm) --
(-3mm,3mm) node[left] {$\scriptstyle \alpha_2$} ;
\draw[dotted,thin,gray] (45mm,3.75mm) -- (0mm,3.75mm) node[left]
{$\scriptstyle \alpha_3$} ; \draw[dotted,thin,gray] (45mm,17.2mm) --
(0mm,17.2mm) node[left] {$\scriptstyle \alpha_4$} ;

\draw[dashed] (5mm,1mm) -- (5mm,0mm) node[below] {$\scriptstyle
b_1$} ;

\draw[dashed] (14mm,3mm) -- (14mm,0mm) node[below] {$\scriptstyle
b_2$} ;

\draw[dashed] (18mm,3.75mm) -- (18mm,0mm) node[below] {$\scriptstyle
b_3$} ;

\draw[dashed] (24.154mm,16.1mm) -- (24.154mm,0mm) node[below]
{$\scriptstyle b_4$} ;

\draw[dashed] (41mm,18mm) -- (41mm,0mm) node[below] {$\scriptstyle
b_5$} ;

\draw[dashed] (11mm,2.15mm) -- (11mm,0mm) node[below] {$\scriptstyle
a_1$} ;

\draw[dashed] (16.4mm,3mm) -- (16.4mm,0mm) node[below]
{$\scriptstyle a_2$} ;

\draw[dashed] (21mm,3.75mm) -- (21mm,0mm) node[below] {$\scriptstyle
a_3$} ;

\draw[dashed] (31mm,17.2mm) -- (31mm,0mm) node[below] {$\scriptstyle
a_4$} ;









\end{tikzpicture}
\caption{Example of Optimal F with general
$h$}
\label{fig:generalcase-illu}
\end{figure}

Looking at the extremizing distribution $F$ pictured in Figure~\ref{fig:generalcase-illu}, we can see that computing the lower expectation consists in concentrating  probability masses over local minima, while giving the less possible amount of probability mass to higher values of $h(x)$, as in the case of a function having one maximum. Thus, our results confirm what could have intuitively be guessed at first sight. They also give analytical and computational tools to compute lower and upper expectations. They are illustrated in the next example. 

\begin{example}
\label{exmp:general}
We consider the same p-box $\pbox$ as in the previous examples (see Example~\ref{exmp:mono-uncon}). However, we assume that the loss function is of the type $h(x)=(0.6x)\cos(x)$. It could, for instance, model the return of a game based on the movement of a pendulum. It could also model the loss incurred by a unit failure whose functioning alternate between low and full capacity (failure during low capacity periods costing less). As a loss after failure has to be positive, one can consider $h(x)+\mu$, with $\mu$ a positive constant\footnote{This does not change further calculations, as $\underline{\mathbb{E}}(h + \mu)=\underline{\mathbb{E}}(h)+\mu$.}. $h(x)$ is oscillating between local maxima and minima. These extrema are solutions of  $\cos
(x)=x\sin(x)$:
\[
a_{1}=0.860,\ b_{1}=3.426,\ a_{2}=6.\allowbreak437,\ b_{2}=9.529,\ a_{3}%
=12.645,
\]%
\[
b_{3}=15.771,\ a_{4}=18.\allowbreak902,\ b_{4}=22.036,\ a_{5}=25.172,\ b_{5}%
=28.31.
\]
We will compute  the extremizing distribution for each intervals $[b_i,b_{i+1})$ for $i=1,\ldots,5$, with $b_0=0$. Let us analyze the first interval $[0,b_{1})$. The value $\alpha\in(0,1)$ in
this interval can be found as a root of the equation
\begin{align*}
&  \left(  \max\left(  -2\ln(1-\alpha),0\right)  \right)  \cdot\cos
(\max\left(  -2\ln(1-\alpha),0\right)  )\\
&  =\left(  \min\left(  -5\ln(1-\alpha),3.426\right)  \right)  \cdot\cos
(\min\left(  -5\ln(1-\alpha),3.426\right)  ).
\end{align*}
However, many different values of $\alpha \in (0,1)$ are solutions to the above equations. Relying on the proof of Proposition~\ref{prop:manymaxmin} and on the various subcases exposed therein (see Fig.~\ref{fig:gencase}), we should, for a given interval $[b_i,b_{i+1})$,  take only root(s) which provides the
interval $[a^{\prime},a^{\prime\prime}]$ such that $a_{i}%
\in\lbrack a^{\prime},a^{\prime\prime}]$. For $[0,b_{1})$, this corresponds to $\alpha=0.215$, for which  values $a^{\prime}%
,a^{\prime\prime}$ are 
\[
a^{\prime}=\max\left(  -2\ln(1-\alpha),b_{i-1}\right)  =\max\left(
-2\ln(1-0.215),0\right)  =\allowbreak0.483,
\]%
\[
a^{\prime\prime}=\min\left(  -5\ln(1-\alpha),b_{i}\right)  =\min\left(
-5\ln(1-0.215),3.426\right)  =\allowbreak1.209.
\]
It can be seen from the above that $a_{1}=0.860\in\lbrack0.483,\allowbreak
1.209]$. We can now determine the extremizing distribution function in $[0,b_{1})$, which is as
follows:%
\[
F\left(  x\right)  =\left\{
\begin{array}
[c]{ll}%
1-\exp(-0.5\cdot x), & x<\allowbreak0.483\\
0.215, & \allowbreak0.483\leq x\leq1.209\\
1-\exp(-0.2\cdot x), & 1.209<x<3.426
\end{array}
\right.  .
\]
This corresponds to the case 2.1. of Figure~\ref{fig:gencase}. the "jump" (i.e., probability mass) at point $b_{1}$ is of the size
\[
\min\left(  1-\exp(-0.5\cdot3.426),0.808\right)  -\max\left(  1-\exp
(-0.2\cdot3.426),0.215\right)  =\allowbreak0.312.
\]
Since $\overline{F}(3.426)-\underline{F}%
(3.426)=0.33>\allowbreak0.312$, this means that the extremizing distribution in $[b_1,b_{2})$ starts with a constant value $F(b_1)=\underline{F}%
(3.426)+0.312=0.808$ and with an horizontal line. Moreover, we can check that $0.808$ is the right starting point since it is a root of the equation 
\begin{align*}
&  \max\left(  -2\ln(1-\alpha),3.426\right)  \cdot\cos(\max\left(
-2\ln(1-\alpha),3.426\right) \\
&  =\min\left(  -5\ln(1-\alpha),9.529\right)  \cdot\cos(\min\left(
-5\ln(1-\alpha),9.529\right)  .
\end{align*}
And we have $a^{\prime}=\allowbreak3.426$ and $a^{\prime\prime}=\allowbreak8.263$ for $\alpha=0.808$. By
taking into account the analysis of the first interval, we can write
\[
F\left(  x\right)  =\left\{
\begin{array}
[c]{ll}%
0.808, & \allowbreak3.426\leq x\leq8.263\\
1-\exp(-0.2\cdot x), & 8.263<x<9.529
\end{array}
\right.  .
\]
This correspond to case 2.3. of Figure~\ref{fig:gencase}. the jump at $b_{2}$ has value $\allowbreak
9.77\times10^{-2}$, and we have again $\overline{F}(9.529)-\underline{F}%
(9.529)=\allowbreak0.14>\allowbreak9.77\times10^{-2}$. Analysis for other intervals are similar (they all belong to case 2.3.). For the third interval
$[b_{2},b_{3})$, $\alpha=0.948$, $a^{\prime}=\allowbreak
9.529$, $a^{\prime\prime}=14.831$ and we have
\[
F\left(  x\right)  =\left\{
\begin{array}
[c]{ll}%
0.949, & \allowbreak\allowbreak\allowbreak9.\,\allowbreak529\leq
x\leq\allowbreak14.\,\allowbreak831\\
1-\exp(-0.2\cdot x), & 14.\,\allowbreak831<x<15.771
\end{array}
\right.  .
\]
The jump at $b_{3}$ is of value $\allowbreak2.867\times10^{-2}$, and for $[b_{3},b_{4})$, we have $\alpha=0.986$, $a^{\prime}=\allowbreak15.771\allowbreak$, $a^{\prime\prime}=\allowbreak
21.255$ and 
\[
F\left(  x\right)  =\left\{
\begin{array}
[c]{ll}%
0.986, & \allowbreak\allowbreak\allowbreak15.\allowbreak771\leq x\leq
21.255\allowbreak\\
1-\exp(-0.2\cdot x), & 21.255<x<22.036
\end{array}
\right.  .
\]
The jump at $b_{4}$ is of value $\allowbreak8.189\times10^{-3}$, and for $[b_{4},b_{5})$, we have $\alpha=0.996$, $a^{\prime}=22.036\allowbreak$, $a^{\prime\prime}=27.62$ and 
\[
F\left(  x\right)  =\left\{
\begin{array}
[c]{ll}%
0.996, & \allowbreak\allowbreak\allowbreak\allowbreak22.036\leq x\leq
27.62\allowbreak\\
1-\exp(-0.2\cdot x), & 27.62<x<28.31
\end{array}
\right.  .
\]
The jump at point $b_{5}$ is of the size $\allowbreak\allowbreak
3.076\times10^{-3}$. 

Note that jump sizes decrease as index $i$ increase. This is not true in general, and is here due to the particular shape of $h(x)$. By computing the extremizing distribution for every interval $[b_{i-1},b_{i})$, we can reach the lower expectation. That is, if we note $\underline{\mathbb{E}}_{i}(h)$ the lower expectation of $h$ computed with the extremizing distribution obtained for $i$
intervals $[b_{j-1},b_{j}), j=1,\ldots,i$, and if $h$ have a finite number of local maxima
and minima, say $r$, then $\underline{\mathbb{E}}(h)=\underline{\mathbb{E}%
}_{r}(h)$. However, in this example, $r=\infty$ and $\underline{\mathbb{E}}%
(h)=\lim_{r\rightarrow\infty}\underline{\mathbb{E}}_{r}(h)$. Therefore, only an approximate solution can be found\footnote{We assume here that the expectation
$\underline{\mathbb{E}}(h)$ exists.}. We can therefore let $r$ increase until $\left\vert \underline{\mathbb{E}}%
_{r}(h)-\underline{\mathbb{E}}_{r-1}(h)\right\vert \leq\varepsilon$,
with $\varepsilon>0$ a prescribed precision. For instance, we have%
\begin{align*}
\underline{\mathbb{E}}_{1}(h)  & =\int_{0}^{\allowbreak0.483}0.6x\cos
(x)\cdot0.5e^{-0.5x}\mathrm{d}x\\
& +\int_{1.209}^{\allowbreak3.426}0.6x\cos(x)\cdot0.2e^{-0.2x}\mathrm{d}x\\
& +\allowbreak0.6\cdot\allowbreak3.426\cos(\allowbreak3.426)\cdot0.312\\
& =\allowbreak-0.82.
\end{align*}
Pursuing the computations, we have
$$\underline{\mathbb{E}}_{2}(h)= -1.558, \quad  \underline{\mathbb{E}}_{3}(h)= -1.9, \quad \underline{\mathbb{E}}_{4}(h)= -2.033, \quad \underline{\mathbb{E}}_{5}(h)= -2.093.$$
If we take $\varepsilon=0.1$, then $\left\vert \underline{\mathbb{E}}_{5}(h)-\underline{\mathbb{E}}%
_{4}(h)\right\vert =\allowbreak0.06<0.1$, and we consider $\underline{\mathbb{E}}_{5}(h)=-2.093$ as a sufficient approximation of the true (but unknown) lower approximation. Upper expectation of $h$ can be obtained by considering the function $-h(x)$ and by computing $\underline{\mathbb{E}}(-h)$. Hence $\overline{\mathbb{E}}(h)=-\underline{\mathbb{E}}(-h)=1.94$ (approximation with $\varepsilon=0.1$).
\end{example}

This example is useful in two respects: first, it illustrates why it is useful to have results concerning the piece-wise extremizing distribution; second, it shows that even when analytical calculations are possible, it is not always possible to compute an exact value, hence the interest of the generic methods proposed in Section~\ref{sec:probstat}. This is particularly true when $h$ has an infinity of local extrema and when $\udf,\ldf$ have infinite support. It also addresses the question of the choice of levels $\alpha$ when many solutions are possible.

Coming back to numerical approximations using linear programming, our results indicates that some regions should be sampled in priority. For example, when computing lower expectations, one should primarily consider values $b_i$ (local minima) and sample in neighbourhoods of these values, as it is where probability masses are concentrated. The converse (sampling around local maxima) holds when computing upper expectations.

If we now consider random set, we can formulate the problem of computing lower expectations as follows: let $m$ be the number of local minima, and let $\gamma_{j_{\ast}},\gamma_{j^{\ast}}$ be the two values bounding the probability mass concentrated on local minima $b_{j}$, for $j=1,\ldots,m$ (for example, for the local minima $b_{2}$ in Figure~\ref{fig:generalcase-illu}, we would have $\gamma_{2_{\ast}}=\alpha_1,\gamma_{2^{\ast}}=\alpha_2$), then 
\begin{equation}
 \underline{\mathbb{E}}(h)=\sum\limits_{j=1}^m(\int\limits_{\gamma_{(j-1)^{\ast}}%
}^{\gamma_{j_{\ast}}}\min (h(a_{\ast\gamma}),h(a_{\gamma
}^{\ast})) d\gamma+(\gamma_{(j)^{\ast}%
}-\gamma_{j_{\ast}})h(b_{j})). \label{eq:manymaxlowgen}%
\end{equation}
This comes down to sum all the probability masses concentrated on local minima, and to calculate integrals when the extremizing distribution coincide either with $\udf$ or $\ldf$. Note that, as in Example~\ref{exmp:general}, $m$ could be equal to $\infty$. This formulation clearly shows that, when using numerical methods with the random set approach, there is no need to discretize in finer intervals the intervals $[\gamma_{j_{\ast}},\gamma_{(j)^{\ast}}]$, as it won't improve the precision of the result.

The case of conditional expectation with general function will not be treated here, as it would require long development that wouldn't bring many new ideas.

\section{Conclusions}

We have considered the problem of computing lower and upper
expectations on p-boxes and particular functions under two different
approaches: by using linear programming and by using the fact that
p-boxes are special cases of random sets. Although the two
approaches try to solve equivalent problems, their differences
suggest different ways to approximate the solutions of those
problems. As we have seen, knowing the behaviour of the function over which lower and upper expectations are to be estimated can greatly increase the computational efficiency (and even permit analytical computation).

However, more important than their differences is the complementarity of both
approaches. Indeed, one approach can shed light on some problems
obscured by the other approach (e.g., the level $\alpha$ of
proposition~\ref{pr:NonMonF1}). Another advantage of combining both
approaches is the ease with which some problems are solved and the
elegant formulation resulting from this combination (e.g., the
conditional case). Let us nevertheless note that the constraint
programming approach can be applied to imprecise probabilities in
general, while the random set approach is indeed limited to random
sets.

In this paper, we have concentrated on the case where uncertainty bears on one variable. 
The case where multiple variables are tainted with uncertainty described by p-boxes will be studied in a forthcoming paper. Concerning future work related to this topic, three lines of research seem interesting to us:
\begin{itemize}
\item study of other simple representations : it is desirable to achieve similar studies for other simple uncertainty representations involving sets of probabilities. This includes probability intervals~\cite{CamposAll94}, possibility distributions~\cite{DuboisPrade88}, clouds~\cite{Neumaier04}.
\item Discretization schemes : when exact solutions cannot be computed, what is the best choice of points $x_1,\ldots,x_N$ or of levels $\gamma_1,\ldots,\gamma_M$, respectively to approximate the solution by using LP or RS (already mentioned by other authors~\cite{Tonon08}). We have mentioned how our results can possibly help in this task, but proposing generic algorithms and empirically testing them largely remains to be done.
\item Convex mixture of functions : in some applications, one can choose a strategy that is a convex mixture between a finite set of options having utility $h_1,\ldots,h_N$. For such cases, one often has to find the weights $\lambda_1,\ldots,\lambda_N$ such that $\sum_{i=1,N} \lambda_i h_i$ have the maximal lower expectation. It would be interesting to study whether similar results as the ones exposed in this paper also exists for this problem when using simple uncertainty representations (e.g., p-boxes).
\end{itemize}

We would like to end this paper with two final remarks:
\begin{itemize}
\item it is clear from our results  that extreme distributions over which the upper and lower expectations will be reached will be, in general, discontinuous. Since any discontinuous functions can be approximated as close as one wants by continuous ones, we do not see it as a big flaw. However, in some cases, it could be desirable to add constraints about which cumulative distributions inside $\pbox$ are admissible. This kind of questions is adressed, for example, by Kozine and Krymsky~\cite{KozineKrymsky07}.
\item We mention at the beginning of the paper that our study is restricted to the case where either cumulative distributions were assumed to be $\sigma$-additive or where $h$ was continuous. Again, this is not a big limitation when dealing with practical applications, and this avoids many mathematical subtleties arising with the consideration of finitely additive probabilities~\cite{miranda2006c}.
\end{itemize}

\bibliographystyle{plain}
\bibliography{LevSeb_longver}

\end{document}